\newcommand{\bigveebar}{%
  \mathop{\mathlarger{\mathlarger{\veebar}}}\displaylimits%
}
\newtheorem{theorem}{Theorem}[section]
\newtheorem{corollary}{Corollary}[section]
\definecolor{lime}{HTML}{A6CE39}
\DeclareRobustCommand{\orcidicon}{
	\begin{tikzpicture}
	\draw[lime, fill=lime] (0,0) 
	circle [radius=0.16] 
	node[white] {{\fontfamily{qag}\selectfont \tiny ID}};
	\draw[white, fill=white] (-0.0625,0.095) 
	circle [radius=0.007];
	\end{tikzpicture}
	\hspace{-2mm}
}
\DeclareCiteCommand{\cite}[\mkbibsuperscript]
  {\usebibmacro{prenote}}
  {%
    \bibhyperref{\printfield{labelnumber}}%
  }
  {\multicitedelim}
  {\usebibmacro{postnote}}
\DeclareCiteCommand{\parencite}[\mkbibsuperscript]
  {\usebibmacro{prenote}}
  {%
    \bibhyperref{\printfield{labelnumber}}%
  }
  {\multicitedelim}
  {\usebibmacro{postnote}}
\title{Mixed-Integer Reaggregated Hull Reformulation of Special Structured Generalized Linear Disjunctive Programs}
\author[1]{Albert Joon Lee\orcidB}
\author[1,*]{David E. Bernal Neira\orcidA}
\affil[1]{Davidson School of Chemical Engineering, Purdue University, West Lafayette, IN 47907, USA}
\date{*Email: dbernaln@purdue.edu}
\date{}
\begin{document}

\maketitle

\begin{abstract}
    Generalized Disjunctive Programming (GDP) provides a powerful framework for combining algebraic constraints with logical disjunctions. To solve these problems, mixed-integer reformulations are required, but traditional reformulation schemes, such as Big-M and Hull, either yield a weak continuous relaxation or result in a bloated model size.
    \textcite{castro2012generalized} showed that scheduling problems can be formulated as GDP by modeling task orderings as disjunctions with algebraic timing constraints.
    Moreover, in their work, a particular representation of the single-unit scheduling problem, namely using a time-slot concept, can be reformulated as a tight yet compact mixed-integer linear program with notable computational performance. 
    Based on that observation, and focusing on the case where the constraints in disjunctions are linear and share the same coefficients, we connect the characterization of the convex hull of these disjunctive sets by \textcite{jeroslow1988simplification} and \textcite{blair1990representation} with Castro and Grossmann’s time-slot reaggregation strategy to derive a unified reformulation methodology.
    We test this reformulation in two problems, single-unit scheduling and two-dimensional strip-packing (also worked by \textcite{castro2012time}).
    We derive new formulations of the general precedence concept of single-unit scheduling and symmetry-breaking formulations of the strip-packing problem, yielding mixed-integer programs with strong theoretical guarantees, particularly compact formulations in terms of continuous variables, and efficient computational performance when solving them with commercial mixed-integer solvers for these problems.
\end{abstract}


\section{Introduction}
In this section, we introduce the Generalized Disjunctive Programming (GDP) framework, review the GDP formulations for the single‐unit scheduling problem presented by \textcite{castro2012generalized}, and summarize our contributions to developing and evaluating a fully reaggregated hull reformulation for special structured generalized linear disjunctive programs.

\subsection{Generalized Disjunctive Programming}

Generalized Disjunctive Programming (GDP) is a mathematical optimization framework that involves both discrete and continuous variables, as well as algebraic constraints and logical constraints, including disjunctions.
The disjunctive terms in a GDP model represent alternative system configurations or operational modes, where each disjunct is associated with specific sets of constraints, which the corresponding Boolean variables can selectively activate.
In this work, we focus on the case where the constraints within the disjunctions are linear inequalities, hence resulting in Generalized Linear Disjunctive Programs (GLDP); see, for example, \textcite{sawaya2012hierarchy},

\begin{align*}
\tag{GDP}
\label{prob:GDP}
\min_{\mathbf{x},\mathbf{Y}} \quad & f(\mathbf{x})  \\
\text{s.t.} \quad & g(\mathbf{x}) \leq \mathbf{0} \\
& \bigveebar_{j \in D_k} \begin{bmatrix}  Y_{jk} \\ \mathbf{A}_{jk}\mathbf{x} \leq \mathbf{b}_{jk} \end{bmatrix}, \quad \forall k \in K \\
& \Omega(\mathbf{Y}) = \text{True} \\
& \mathbf{x} \in [\mathbf{x}^L, \mathbf{x}^U] \subseteq \mathbb{R}^{n} \\
& Y_{jk} \in \{\text{False}, \text{True}\}, \quad \forall j \in D_k,\ \forall k \in K. \\
\end{align*}
Here, the optimization is performed over the \(n\)-dimensional continuous variable \(\mathbf{x}\) bounded by vectors \((\mathbf{x}^L, \mathbf{x}^U) \in \mathbb{R}^n\) and \(n_Y\)=dimensional Boolean variable \(\mathbf{Y}\).
The objective function \(f: \mathbb{R}^n \to \mathbb{R}\) can be represented as \(\mathbf{c}^{\top}\mathbf{x}\) if linear, where \(\mathbf{c}\in\mathbb{R}^n\) is the objective coefficient vector.
The global constraints \(g: \mathbb{R}^n \to \mathbb{R}^m\) represent constraints that need to be satisfied regardless of the disjunction values.
If \(g(\mathbf{x})\) is a set of linear functions, it can be represented as  \(\mathbf{B}\mathbf{x} \leq \mathbf{b}\), where \(\mathbf{B}\in\mathbb{R}^{m\times n}\) and \(\mathbf{b}\in\mathbb{R}^m\).
Each disjunct \(j\in D_k\) enforces linear constraints of the form \(\mathbf{A}_{jk}\mathbf{x}\le \mathbf{b}_{jk}\) with \(\mathbf{A}_{jk}\in\mathbb{R}^{m_{jk}\times n}\) and $\mathbf{b}_{jk}\in\mathbb{R}^{m_{jk}}$ in case its corresponding Boolean variable \(Y_{jk}\) is set to \(\text{True}\).
The exclusive-or constraint, denoted by the operator \(\veebar\), ensures exactly one disjunct \(j\in D_k\) is active for each disjunction \(k \in K\).
The logical proposition \(\Omega: \{\text{False}, \text{True}\}^{n_{Y}} \to \{\text{False}, \text{True}\}\) enforces the activation of disjuncts and any additional logical relations among Boolean variables.
Note that \(n_Y\) refers to the number of Boolean variables in the GDP.
In what follows, we will also focus on disjunctions whose disjuncts have linear constraints that share a common coefficient matrix, i.e., \(\mathbf{A}_{jk}\equiv \mathbf{A}_k\) for all \(j\in D_k\), a structure we later exploit in our reaggregated hull reformulation (RHR).

\subsection{Mixed-integer Programming Reformulations of GDP}
\label{sec:mip_reformulations}

A common approach to solving GDPs is to reformulate them into a mixed-integer program (MIP) that standard solvers can handle \parencite{grossmann2003generalized, trespalacios2014review, sawaya2012hierarchy}. 
The core strategy of these reformulations is to introduce binary variables \(y_{jk} \in \{0, 1\}\), which encode the activation of each disjunct \parencite{vielma2015mixed}.
The general structure of such MIP reformulations can be summarized as

\begin{align*}
\tag{MIP}
\label{prob:mip}
\min_{\mathbf{x}, \mathbf{y}} \quad & f(\mathbf{x}) \\
\text{s.t.} \quad &  (\mathbf{x},\mathbf{y}) \in R \\
 & g(\mathbf{x}) \leq \mathbf{0} \\
 & \mathbf{D}\mathbf{y} \leq \mathbf{d} \\
    & \mathbf{x} \in [\mathbf{x}^L, \mathbf{x}^U ] \subseteq \mathbb{R}^{n}\\
    & y_{jk} \in \{0, 1 \}, \quad \forall j \in D_k,\ \forall k \in K. \\
\end{align*}

In this problem, \(\mathbf{y}\) is the binary vector obtained by reformulating the Boolean variables \(\mathbf{Y}\), and the constraints 
\(\mathbf{D}\mathbf{y} \leq \mathbf{d}\) represent the logical propositions originally encoded as \((\Omega({Y}_{jk}) = \text{True})\) from problem \eqref{prob:GDP}.
The problem \eqref{prob:mip} provides no details on how the reformulation is encoded as a mixed-integer program; it does not yet include the explicit algebraic reformulation of the disjunctions that we denote as the set \(R\). 
To complete the reformulation, each disjunction must be expressed using algebraic constraints that enforce the conditional logic of the model.
Various reformulations exist for this step, of which we will cover the Big-M and hull reformulations, and contribute the reaggregated hull reformulation for the special case where \(\mathbf{A}_{jk}\equiv \mathbf{A}_k \quad \forall j\in D_k\).

One way to represent the disjunctions \(k \in K\) into mixed-integer form is to introduce Big-M constraints \cite{raman1994modelling}.
Here, constraints are enforced exactly or trivially by having a large coefficient relax them when their corresponding binary variable is equal to zero, which yields
\begin{align*}
\tag{BM}
\label{eq:BigM}
R =  \left\{ (\mathbf{x},\mathbf{y}) : 
\begin{aligned}
 & \mathbf{A}_{jk}\mathbf{x} - \mathbf{b}_{jk} \leq \mathbf{M}_{jk} (1 - y_{jk}) \quad \forall k \in K, \ j \in D_k, \\
 & \sum_{j \in D_k} y_{jk} = 1 \quad \forall k \in K
 \end{aligned}
 \right\}.
\end{align*}
Since each disjunctive constraint \(\mathbf{A}_{jk}\mathbf{x} \leq \mathbf{b}_{jk}\) is linear and the bounds of the variables \(\mathbf{x}\) are given as \(\mathbf{x}^L \leq \mathbf{x} \leq \mathbf{x}^U\), the valid Big-M coefficient \(\mathbf{M}_{jk}\) can be bounded by computing the maximum violation of the linear constraints in the disjunctions $\mathbf{M}_{jk} \geq \max_{\mathbf{x} \in [\mathbf{x}^L,  \mathbf{x}^U]}\{\mathbf{A}_{jk}\mathbf{x} -\mathbf{b}_{jk}\}$.

The Big-M reformulation can be advantageous because of its simplicity and low modeling overhead; however, if the chosen \(M\) constant is too small, it might fail to enforce the logical condition in the original \eqref{prob:GDP}.
If it is large, the relaxation becomes weak and might even lead to numerical instability, which can increase the number of iterations required for convergence~\cite{vielma2015mixed}.

On the other hand, the hull reformulation (HR) disaggregates the vector of continuous variables \(\mathbf{x}\) for each disjunction \(k \in K\) with the index set \(D_k\) into disaggregated variables \(\hat{\mathbf{x}}_{jk}\) for \(j \in D_k\) and these disaggregated variables \(\hat{\mathbf{x}}_{jk}\) are associated with binary variables \(y_{jk}\) \cite{grossmann2003generalized}.
This yields
\begin{align*}
\tag{HR}
\label{eq:Hull}
R =  \left\{ (\mathbf{x},\mathbf{y}) : 
\begin{aligned}
    & \mathbf{x} = \sum_{j \in D_k} \hat{\mathbf{x}}_{jk} \quad \forall k \in K, \\
    & \mathbf{A}_{jk}\,\hat{\mathbf{x}}_{jk}- \mathbf{b}_{jk}\,y_{jk} \leq 0 \quad \forall k \in K, \ j \in D_k, \\
    & \mathbf{x}^L y_{jk} \leq \hat{\mathbf{x}}_{jk} \leq \mathbf{x}^U y_{jk} \quad \forall k \in K, \ j \in D_k, \\
    & \sum_{j \in D_k} y_{jk} = 1 \quad \forall k \in K, \\
    & \hat{\mathbf{x}}_{jk} \in [\mathbf{x}^L, \mathbf{x}^U ] \subseteq \mathbb{R}^{n} \quad  \forall k \in K, \ j \in D_k\\
\end{aligned}
\right\}.
\end{align*}
As the disaggregated variables \(\hat{\mathbf{x}}_{jk}\) and the binary variables \(y_{jk}\) are coupled, \(\hat{\mathbf{x}}_{jk} = 0\) when the corresponding disjunct is inactive, i.e., \((y_{jk}=0\)).
In contrast, when the disjunct is selected \((y_{jk}=1)\), the disaggregated variable \(\hat{\mathbf{x}}_{jk}\) is allowed to take values within the same bounds as the original variable \(\mathbf{x}\), that is, \([\mathbf{x}^L, \mathbf{x}^U]\).
By assigning the same lower and upper limits as the original variable to each disaggregated variable, the formulation ensures that active disjuncts respect their specific feasible ranges and inactive disjuncts do not affect the solution.
The HR constructs the convex hull of each disjunctive set, as shown in Theorem \ref{th:chull}, yielding a tighter relaxation than the Big-M reformulation, potentially reducing the number of nodes required to solve the problem to in a branch-and-bound algorithm \parencite{jeroslow1988alternative,balas1988convex,vielma2015mixed}.
However, because it disaggregates each original variable into one variable per disjunct, the model size grows substantially, making the problem potentially more expensive to solve for larger models \parencite{vielma2015mixed,balas1988convex}.

Note that having the same left-hand coefficients in the linear constraints of the disjunctions would only mean replacing the coefficients \(\mathbf{A}_{jk}\equiv \mathbf{A}_k \quad \forall j\in D_k\) for either reformulation.

Notice that in the case where the objective function and the global constraints are linear, i.e. \(f(\mathbf{x}) = \mathbf{c}^{\top}\mathbf{x}\), \(g(\mathbf{x})=\mathbf{B}\mathbf{x}- \mathbf{b}\), both the \ref{eq:BigM} and the \ref{eq:Hull} reformulations yield a problem \eqref{prob:mip} that is a mixed-integer linear program (MILP).
This valuable property is also inherited by the proposed RHR method, as the reformulation of equal left-sided linearly constrained disjunctions yields a mixed-integer linear set, as described below.

In addition to the Big-M and hull reformulations, alternative techniques have been developed to accelerate solution time.
For example, \textcite{trespalacios2015algorithmic} used a hybrid reformulation of GDP that combines the strengths of Big-M and hull reformulations.
Multi-parameter Big-M (MBM) strategies assign distinct \(M\)-values to individual disjunctive constraints, yielding a strictly tighter relaxation than the classical Big-M formulation without increasing the number of binary variables \cite{trespalacios2015improved}.
Finally, the $p$-split formulation constructs a hierarchy between Big-M and hull reformulations by dividing each disjunction into $p$ partitions and applying hull reformulation within each partition, so that $p=1$ yields Big-M and $p$ equal to the number of terms recovers the hull reformulation over the whole disjunction \cite{kronqvist2025p}.



\subsection{Single-Unit Scheduling as a GDP}
The single-unit scheduling problem is defined as follows.
The problem is to minimize the overall completion time (make-span) of jobs with given processing times, release dates, and due dates on a single machine.
Suppose that each instance involves a set \(I\) of jobs, where each job \(i\) has a processing time \(p_i\), a release time \(r_i\), due time \(d_i\), and start time \(x_i\).
There were no changeovers, and the objective is to minimize the overall make-span \(MS\).
\textcite{castro2012generalized} formulated the single-unit sequencing problem as a Generalized Disjunctive Programs (GDP) using three different concepts: general precedence, immediate precedence, and time-slot.
When these GDPs were reformulated into MIP as described in \S \ref{sec:mip_reformulations}, various MIP formulations in the literature were rediscovered, besides proposing new models.
By formulating the time slot concept, which uses symmetry-breaking concepts, as a GDP, a more compact and efficient MIP formulation was discovered \cite{castro2012generalized}.

\subsubsection{General Precedence Concept}

The general precedence (GP) concept models single-machine sequencing by setting a binary precedence decision on every pair of jobs.
Here, \(x_i\in\mathbb{R}_+\) denotes the start time of job \(i\), and \(Y_{ij}\in\{\mathrm{False},\mathrm{True}\}\) indicates whether job \(i\) precedes job \(j\).
Given every pair \((i,j)\) with \(i<j\), a Boolean variable \(Y_{ij}\) selects one of two exclusive constraints. 
If \(Y_{ij}\) is true, job \(i\) must be completed before the job \(j\) starts \((x_i + p_i \leq x_j)\). 
Otherwise, the job \(j\) precedes the job \(i\) \((x_j + p_j \leq x_i)\).
In addition to the disjunctions, each job must start no earlier than its release time
\begin{equation}
\label{eq:gp_lb}
r_i \leq x_i \quad \forall i \in I.
\end{equation}
It must also be completed no later than its due time
\begin{equation}
\label{eq:gp_ub}
x_i + p_i \leq d_i \quad \forall i \in I.
\end{equation}
Each completion time \(x_i + p_i\) is also bounded by the make-span through \(x_i + p_i \leq MS\), allowing the total make-span to be modeled by minimizing the right-hand side of these constraints.
The model is given as

\begin{align*}
\tag{GP}
\label{prob:gdp_gp}
\min_{MS, \mathbf{x}, \mathbf{Y}} &\ MS \\
\text{s.t.} &\ r_i \leq x_i \quad \forall i \in I \\
&\ x_i + p_i \leq d_i \quad \forall i \in I  \\
&\ \begin{bmatrix} 
Y_{ij} \\
x_i + p_i \leq x_j 
\end{bmatrix}
\veebar
\begin{bmatrix} 
\neg Y_{ij} \\
x_j + p_j \leq x_i 
\end{bmatrix} \quad \forall i \in I, \ \forall j \in I, \ i<j  \\
&\ x_i + p_i \leq MS \quad \forall i \in I \\
    & Y_{ij} \in \{\text{False},\text{True}\} \quad \forall i \in I,\ \forall j \in I \\
    & x_i \in \mathbb{R}_+ \quad \forall i \in I \\
    & MS \in \mathbb{R}_+.
\end{align*}

Figure \ref{fig:GP-Single} illustrates the general precedence concept by showing how the Boolean variables \(Y_{ij}\) enforce a sequencing relationship between all pairs of jobs \((i,j)\) which \(i<j\), selecting whether job \(i\) precedes job \(j\) or vice versa.

\begin{figure}
    \centering
    \includegraphics[width=0.7\linewidth]{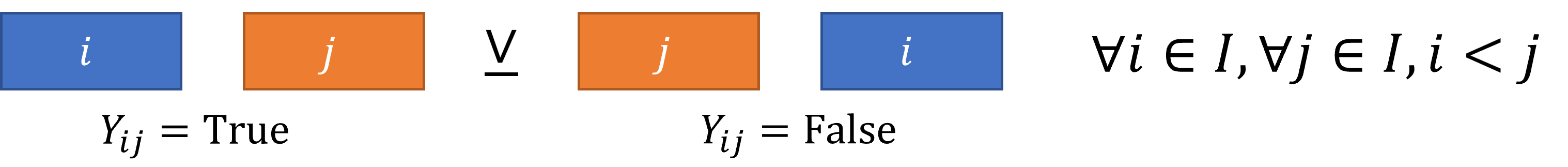}
    \caption{General precedence for single unit problem. Each Boolean variable determines whether job \(i\) is scheduled before job \(j\), or job \(j\) before job \(i\).}
    \label{fig:GP-Single}
\end{figure}

\subsubsection{Immediate Precedence Concept}

The immediate precedence (IP) concept of the scheduling problem models the single-unit sequence as a set of Boolean disjunctions that spot each job's neighbors in the processing order.
For every job \(i\), the Boolean variables \(Y_{ij} \ (j\neq i)\) can activate exactly one successor.
Here, \(x_i\in\mathbb R_+\) is the start time of job \(i\), \(Y_{ij}\in\{\text{False},\text{True}\}\) indicates that \(i\) immediately precedes \(j\), and \(Y_i^{\text{first}},Y_i^{\text{last}}\in\{\text{False},\text{True}\}\) mark \(i\) as the first or last job, respectively.
If one of these variables is true, the corresponding job \(j\) must start immediately after job \(i\) finishes; otherwise, the Boolean variable \(Y_i^{\text{last}}\) is set and \(i\) becomes the last job.
For the mirror sets of Boolean variables, \(Y_{ji}\) and \(Y_{i}^\text{first}\), assign a unique immediate predecessor or tag \(i\) as the first job.
Disjunctions ensure exactly one first and one last job, and logical constraints prevent any job from being both first and last.
Just like the general precedence concept, immediate precedence has the feasibility bounds for the starting time and the make-span linkage.
The notion of immediate precedence of the model is given as

\begin{align*}
\label{eq:gdp_ip}
\tag{IP}
\min_{MS, \mathbf{x}, \mathbf{Y}} &\ MS \\
\text{s.t.} &\ r_i \leq x_i \quad \forall i \in I  \\
&\ x_i + p_i \leq d_i \quad \forall i \in I  \\
&\ x_i + p_i \leq MS \quad \forall i \in I \\
&\ \veebar_{j \neq i} \begin{bmatrix}
    Y_{ij} \\
    x_i +p_i \leq x_j
\end{bmatrix} \veebar \begin{bmatrix}
    Y_{i}^{\text{last}} \\
    x_j + p_j \leq x_i \ \forall j \neq i
\end{bmatrix} \quad \forall i \in I \\
&\ \veebar_{j \neq i} \begin{bmatrix}
    Y_{ji} \\
    x_j +p_j \leq x_i
\end{bmatrix} \veebar \begin{bmatrix}
    Y_{i}^{\text{first}} \\
    x_i + p_i \leq x_j \ \forall j \neq i
\end{bmatrix} \quad \forall i \in I \\
&\ \veebar_{i \in I} \begin{bmatrix}
    Y_{i}^{\text{first}} \\
    x_i + p_i \leq x_j \ \forall j \neq i
\end{bmatrix} \\
&\ \veebar_{i \in I} \begin{bmatrix}
    Y_{i}^{\text{last}} \\
    x_j + p_j \leq x_i \ \forall j \neq i
\end{bmatrix} \\
&\ \neg \left( Y_{i}^{\text{first}} \land Y_{i}^{\text{last}} \right) \quad \forall i \in I\\
    & Y_{ij} \in \{\text{False},\text{True}\} \quad \forall i,j \in I, i \neq j \\
    & Y_{i}^{k} \in \{\text{False},\text{True}\} \quad \forall i \in I, k \in \{\text{first},\text{last}\} \\
    & x_i \in \mathbb{R}_+ \quad \forall i \in I \\
    & MS \in \mathbb{R}_+.
\end{align*}

\begin{figure}
    \centering
    \includegraphics[width=0.7\linewidth]{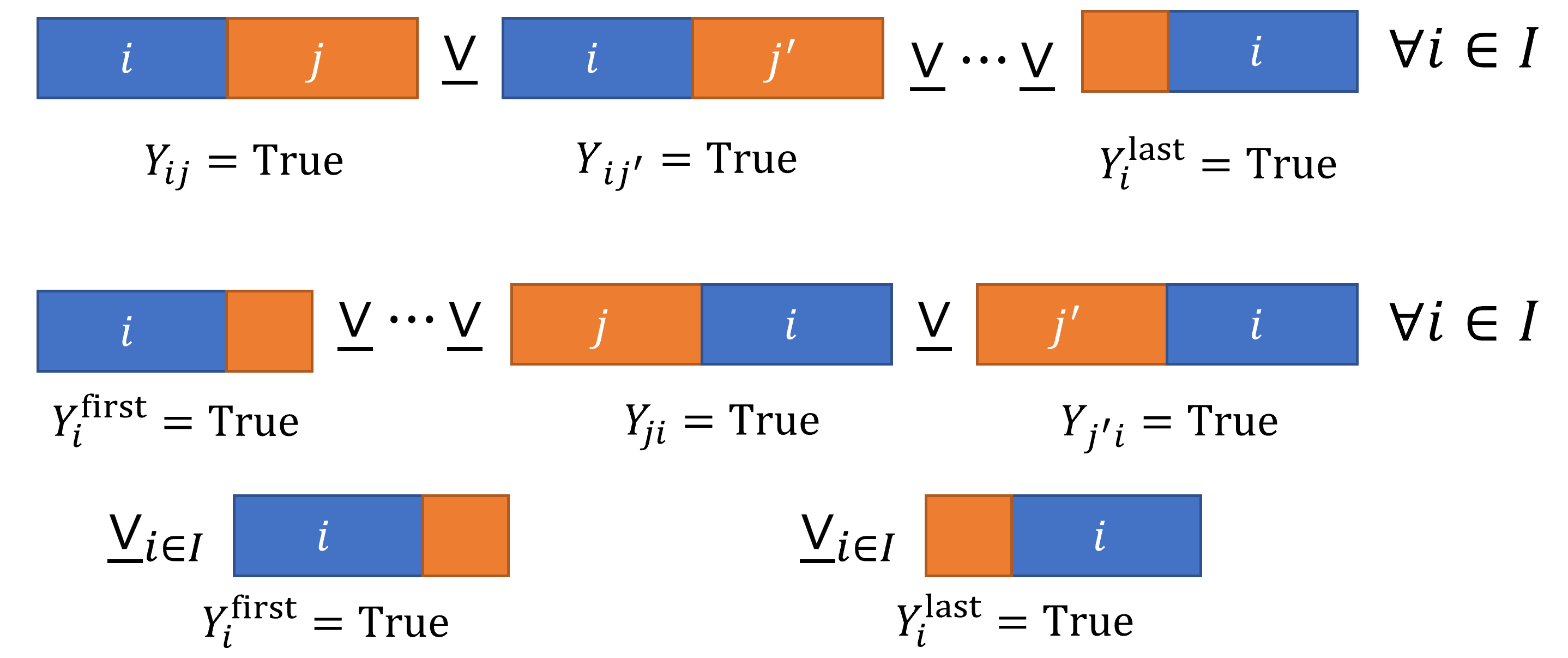}
    \caption{Immediate precedence concept for single unit problem. Disjunctions assign each job a relative position by enforcing pairwise orderings and selecting exactly one first and one last job.}
    \label{fig:IP-Single}
\end{figure}

Figure~\ref{fig:IP-Single} illustrates the concept of immediate precedence, where each disjunct determines whether job \(i\) precedes a specific job \(j\) or is the last job (using \(Y_{ij}\) and \(Y_i^\text{last}\)), or whether job \(i\) follows all other jobs or is the first job (using \(Y_{ji}\) and \(Y_i^\text{first}\)).
Additional disjunctions ensure that exactly one job is assigned as the first and one as the last, and these two roles must be assigned to different jobs.

\subsubsection{Time-Slot Concept}

The time slots (TS) concept defines a grid consisting of \(\lvert T \rvert = \lvert I \rvert \) time slots \(\lvert T \rvert\) and assigns order to the slots.
A Boolean variable \(Y_{it}\) is true precisely when job \(i\) is assigned at slot \(t\); otherwise, it is false.
For each job \(i\), exactly one \(Y_{it}\) is true, ensuring one job is assigned for a single time slot, which yields the following GDP formulation

\begin{align*}
\label{prob:gdp_ts}
\tag{TS}
   \min_{MS, \mathbf{x}, \mathbf{Y}}  &\ MS \\
\text{s.t.} &\ \veebar_{i} \begin{bmatrix}
    Y_{it} \\
    MS\rvert_{t = \lvert T \rvert} + x_{t+1}\rvert_{t \neq \lvert T \rvert} - x_t \geq p_i \\
    x_t \geq r_i \\
    x_t + p_i \leq d_i
\end{bmatrix} \quad \forall t \in T  \\ 
&\ \veebar_t Y_{it} \quad \forall i \in I \\
    & Y_{it} \in \{\text{False},\text{True}\} \quad \forall i \in I,\ \forall t \in T \\
    & x_t \in \mathbb{R}_+ \quad \forall t \in T \\
    & MS \in \mathbb{R}_+,
\end{align*}
where \(x_t \in \mathbb{R}_+\) denotes the start time of slot \(t\).
Whenever slot \(t\) assigned to job \(i\), i.e., \(Y_{it}=\text{True}\), slot \(t\) cannot start before job \(i\)’s release \((x_t \geq r_i)\), must finish by its deadline\(x_t + p_i \leq d_i\), and reserves at least \(p_i\) time until the next event (or make-span) \((MS\rvert_{t = \lvert T \rvert} + x_{t+1}\rvert_{t \neq \lvert T \rvert} - x_t \geq p_i )\).
The time-slot concept replaces the pairwise sequencing decisions with an assignment of each task to a time slot.
Each slot has a Boolean variable that assigns exactly one job to each slot and one slot to each job, thereby implicitly ordering tasks by slot index.
Therefore, the time slot concept yields a significantly smaller MILP formulation than precedence-based models, potentially resulting in faster solution times.

\begin{figure}
    \centering
    \includegraphics[width=0.7\linewidth]{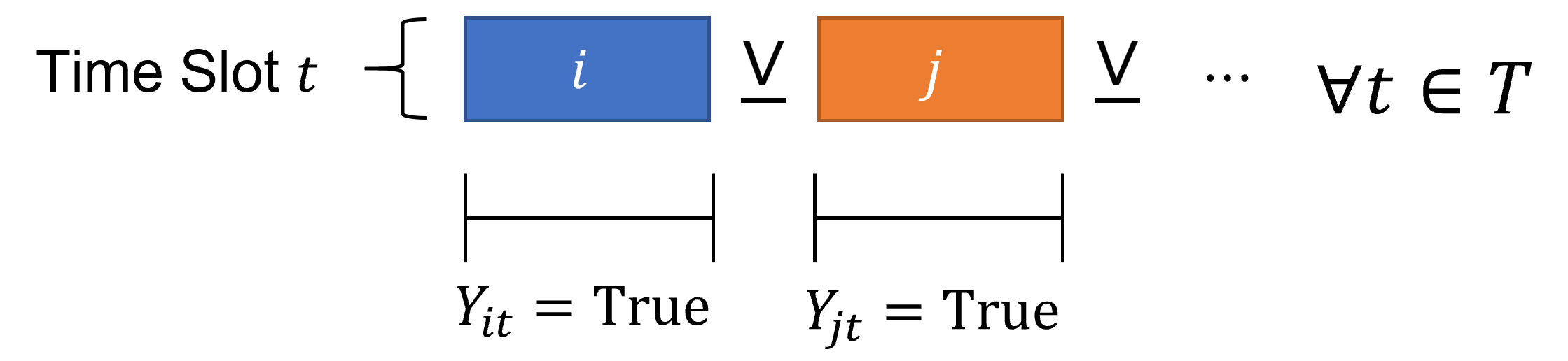}
    \caption{Time slot concept for single unit problem. Each job is assigned to exactly one time slot, establishing an implicit job sequence based on slot order.}
    \label{fig:TS-Single}
\end{figure}

Figure~\ref{fig:TS-Single} illustrates the time-slot concept, where a fixed grid of \(\lvert T \rvert = \lvert I \rvert\) time slots is defined.
Each disjunction enforces that exactly one job \(i\) is assigned to a specific slot \(t\) using the Boolean variable \(Y_{it}\).

\subsubsection{Mixed-integer programming formulations from GDP models}

In \textcite{castro2012generalized}, the \eqref{eq:BigM} and \eqref{eq:Hull} reformulations presented in \S \ref{sec:mip_reformulations}.
For the time-slot problem \eqref{prob:gdp_ts}, \textcite{castro2012generalized} proposed a particular reformulation that aggregates the disaggregated variables of the hull reformulation and yields an MILP of the form,
\begin{align*}
\tag{TS\_RHR}
\label{eq:TSCHC}
        \min_{ MS, \mathbf{x},\mathbf{y}} \quad & MS \\
    \text{s.t.} \quad 
    & MS\rvert_{t=\lvert T \rvert} + x_{t+1}\rvert_{t\neq \lvert T \rvert} - x_t \geq \sum_{i \in I} p_i y_{it} && \forall t \in T \\
    & x_t \geq \sum_{i\in I} r_i y_{it} && \forall t \in T \\
    & x_t \leq \sum_{i\in I}(d_i - p_i) y_{it} && \forall t \in T \\
    & \sum_{t \in T} y_{it} = 1 && \forall i \in I \\
    & \sum_{i \in I} y_{it} = 1 && \forall t \in T \\
    & y_{it} \in \{0,1\} && \forall i \in I,\ \forall t \in T \\
    & x_t \in \mathbb{R}_+ && \forall t \in T \\
    & MS \in \mathbb{R}_+.
\end{align*}

Equation~\eqref{eq:TSCHC} defines a compact time-slot reformulation in which \(x_t\) denotes the start time of slot \(t\), and \(y_{it}\) is a binary variable indicating whether job \(i\) is assigned to slot \(t\). 
Recognizing that the constraints in the \eqref{eq:Hull} reformulation for model \eqref{prob:gdp_ts} share identical left-hand sides, the model replaces them with a single aggregated inequality per slot. 
This eliminates the need for multiple continuous variables and linking constraints, while preserving HR tightness. 
As a result, the model achieves substantial reductions in size and enhanced computational performance \cite{castro2012generalized}.

\subsubsection{Applications of the Time-Slot Concept}

Beyond theoretical advances, the time-slot GDP framework has proven its practical value in diverse scheduling applications.
For example, \textcite{castro2015operating} applied the slot-based model to operating room planning by constructing a time grid for each combination of room and day, and assigning cases (surgeries) to specific slots.
Sequencing and timing constraints were handled directly at the slot level. 
Each slot allowed for at most one case and enforced its duration plus changeover times.
Idle slots were used to accommodate surgeon availability. 
This approach eliminated the need for pairwise precedence variables.
Another example is \textcite{merkert2020optimal}, which adopted the time-slot concept in their district-heating model.
In their hybrid discrete/continuous-time representation, they first define a continuous-time grid with a fixed number of slots (equal to the number of products) and then make all sequencing and timing decisions at the slot level.
Thus, the MILP derived from the problem \eqref{prob:gdp_ts} provides an efficient modeling framework that yields faster solution times on both operating and energy scheduling problems.

\subsection{Contributions of this work}
\label{sec:contributions}

Mixed‐integer reformulations of GDP models must balance two competing goals when solving them using branch-and-bound solvers: maintaining tight convex relaxations to accelerate convergence, and keeping model size manageable to ensure computational tractability.
Big-M reformulations achieve compactness but often yield weak relaxations. 
In contrast, the HR constructs the convex hull of each disjunction by disaggregating the continuous variables, introducing a copy for every disjunct, and thereby significantly increasing model size.
Note that in the case of multiple disjunctions, the hull reformulation does not yield an \emph{ideal}/\emph{sharp} formulation \cite{vielma2015mixed}. 
This trade-off limits the applicability of existing methods to large-scale scheduling and packing problems, highlighting the need for a new approach that unifies both strengths.
Building on the formulation in \eqref{eq:TSCHC}, we propose a general reaggregated hull reformulation that extends the same key insight: a shared left-hand-side structure within the disjunctions that applies to other GDP instances.

Here, we present the two main contributions of this work toward efficient reformulations of linear GDP models:
\begin{itemize}
  \item \emph{Generalization of Castro and Grossmann’s reformulation technique}: We extend Castro and Grossmann’s reaggregated HR of the time-slot concept in a single-unit scheduling problem approach by introducing a reaggregation strategy that constructs in the original space of continuous variables a MIP reformulation, preserving model compactness and relaxation strength. Similar observations on the strength of convex‐hull reformulations for disjunctive programs date back to \textcite{jeroslow1988alternative} and \textcite{blair1990representation}, whose theoretical results we report here in Theorems \eqref{th:chull} and \eqref{th:reaggchull} and connect to GDP.
  \item \emph{Implementation and evaluation of the GDP reformulation}: We cast the reaggregated hull reformulation in a GDP modeling framework and implement it across a diverse suite of scheduling and strip‐packing test cases, demonstrating its practicality and consistent performance improvements over traditional Big‐M and hull variants. The source code and implementation for this work are publicly available at
\footnote{\label{fn:repo}\url{https://github.com/SECQUOIA/GLDP-reaggregated-hull}.}
\end{itemize}

\section{Methodology: Reaggregation Approach}

Suppose that we work with a GDP model, such as the one given in problem \eqref{prob:GDP}, with the following particular structure: Within a disjunction \(k\), the constraints are linear and share a left‐hand side \(\mathbf{A}_{j'k} = \mathbf{A}_{j''k} = \mathbf{A}_k \quad \forall (j',j'') \in D_k\) within each disjunct, as follows

\begin{equation}
    \label{eq:disj_same_lhs}
    \bigveebar_{j \in D_k} \begin{bmatrix}  Y_{jk} \\ \mathbf{A}_k\mathbf{x}\leq \mathbf{b}_{jk} \end{bmatrix}.
\end{equation}

To continue the discussion, we will list some known results regarding the convex hull of the intersection of disjuncts $F$ characterized by polyhedra $P_{jk}$.

\begin{theorem}[Theorem 3.3 in \textcite{balas1985disjunctive}, also in \textcite{balas1974disjunctive}]
\label{th:chull}
    Let $F = \bigcup_{j \in D_k} P_{jk}, \text{ where } P_{jk} = \{ \mathbf{x} | \mathbf{A}_{jk}\mathbf{x}\leq \mathbf{b}_{jk} \} \text{ and } P_{jk} \neq \emptyset$.
    The convex hull of $F$ is given by the set of \((\hat{\mathbf{x}}_{jk},y_{jk}) \in \mathbb{R}^{n+1}, k \in K, \ j \in D_k\) and \(\mathbf{x} \in \mathbb{R}^{n}\) satisfying the following set of linear inequalities
\begin{align*}
    & \mathbf{x} = \sum_{j \in D_k} \hat{\mathbf{x}}_{jk}\\
    & \mathbf{A}_{jk}\,\hat{\mathbf{x}}_{jk}- \mathbf{b}_{jk}\,y_{jk} \leq 0, \quad\ j \in D_k \\
    & \mathbf{x}^L y_{jk} \leq \hat{\mathbf{x}}_{jk} \leq \mathbf{x}^U y_{jk} \\
    & \sum_{j \in D_k} y_{jk} = 1 \quad \forall k \in K \\
    & y_{jk} \geq 0 \\
\end{align*}

\end{theorem}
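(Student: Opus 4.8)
The statement is the classical Balas characterization of the convex hull of a finite union of bounded polyhedra, so the plan is to identify the system of inequalities in the theorem with a single lifted polyhedron $Q$ in the extended space of $(\mathbf{x}, \{\hat{\mathbf{x}}_{jk}\}, \{y_{jk}\})$ and to show that its orthogonal projection onto the $\mathbf{x}$-coordinates coincides with $\operatorname{conv}(F)$. Fixing the disjunction index $k$, each $P_{jk}$ is a nonempty bounded polyhedron since $P_{jk} \subseteq [\mathbf{x}^L, \mathbf{x}^U]$, so $\operatorname{conv}(F)$ is a polytope and in particular closed; this lets me avoid any closure technicalities. I would then establish the set equality through the two inclusions $\operatorname{conv}(F) \subseteq \operatorname{proj}_{\mathbf{x}}(Q)$ and $\operatorname{proj}_{\mathbf{x}}(Q) \subseteq \operatorname{conv}(F)$.

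For the first inclusion, I take $\mathbf{x} \in \operatorname{conv}(F)$. Since $F$ is a union of convex sets, $\mathbf{x}$ admits a representation $\mathbf{x} = \sum_{j \in D_k} \lambda_j \mathbf{x}^j$ with $\mathbf{x}^j \in P_{jk}$, $\lambda_j \geq 0$, and $\sum_j \lambda_j = 1$. The natural lift is $y_{jk} := \lambda_j$ and $\hat{\mathbf{x}}_{jk} := \lambda_j \mathbf{x}^j$, and I would verify each inequality directly: the aggregation $\mathbf{x} = \sum_j \hat{\mathbf{x}}_{jk}$ and the constraints $\sum_j y_{jk} = 1$, $y_{jk} \geq 0$ are immediate; the disjunct inequality follows by scaling $\mathbf{A}_{jk}\mathbf{x}^j \leq \mathbf{b}_{jk}$ by $\lambda_j \geq 0$; and the bounds $\mathbf{x}^L y_{jk} \leq \hat{\mathbf{x}}_{jk} \leq \mathbf{x}^U y_{jk}$ follow from multiplying $\mathbf{x}^L \leq \mathbf{x}^j \leq \mathbf{x}^U$ by $\lambda_j$.

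For the reverse inclusion, I take a point $(\mathbf{x}, \{\hat{\mathbf{x}}_{jk}\}, \{y_{jk}\}) \in Q$. For each $j$ with $y_{jk} > 0$, I define $\mathbf{x}^j := \hat{\mathbf{x}}_{jk} / y_{jk}$, and dividing $\mathbf{A}_{jk}\hat{\mathbf{x}}_{jk} \leq \mathbf{b}_{jk} y_{jk}$ by $y_{jk}$ gives $\mathbf{x}^j \in P_{jk}$. Using $\sum_j y_{jk} = 1$ and $\mathbf{x} = \sum_j \hat{\mathbf{x}}_{jk}$, I would then read off $\mathbf{x} = \sum_{j : y_{jk} > 0} y_{jk}\,\mathbf{x}^j$ as a convex combination of points of $F$, hence $\mathbf{x} \in \operatorname{conv}(F)$.

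The one delicate step — and the one I expect to carry the real content — is the treatment of indices with $y_{jk} = 0$, where the definition $\hat{\mathbf{x}}_{jk}/y_{jk}$ breaks down. In Balas's general theory these components are only required to lie in the recession cone of $P_{jk}$, and the projection then yields merely the \emph{closure} of the convex hull. Here the bound constraints resolve this cleanly: substituting $y_{jk} = 0$ into $\mathbf{x}^L y_{jk} \leq \hat{\mathbf{x}}_{jk} \leq \mathbf{x}^U y_{jk}$ forces $\hat{\mathbf{x}}_{jk} = \mathbf{0}$, so such terms contribute nothing to $\mathbf{x} = \sum_j \hat{\mathbf{x}}_{jk}$ and drop out of the convex combination. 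This is precisely where boundedness of the $P_{jk}$ (equivalently, the finite window $[\mathbf{x}^L, \mathbf{x}^U]$) enters: it collapses each recession cone to $\{\mathbf{0}\}$ and makes the projection equal $\operatorname{conv}(F)$ exactly rather than only its closure.
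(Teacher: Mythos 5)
Your proof is correct, but it takes a genuinely different route from the paper, which supplies no argument of its own for this theorem and simply defers to the proof of Theorem 3.3 in \textcite{balas1985disjunctive}. Balas's theorem is stated and proved for general, possibly unbounded polyhedra: there the terms with $y_{jk}=0$ are handled through recession-cone conditions, and the projection of the lifted polyhedron is shown to equal the \emph{closure} of $\operatorname{conv}(F)$, with exactness recovered only under additional assumptions such as boundedness. Your two-inclusion argument instead specializes to the bounded case from the outset and is elementary and self-contained: the lift $(\hat{\mathbf{x}}_{jk},y_{jk})=(\lambda_j\mathbf{x}^j,\lambda_j)$ in one direction, the renormalization $\hat{\mathbf{x}}_{jk}/y_{jk}$ in the other, and the observation that the box constraints force $\hat{\mathbf{x}}_{jk}=\mathbf{0}$ when $y_{jk}=0$ --- which is precisely the point where the general theorem would yield only the closed hull. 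One caveat worth making explicit: your first inclusion uses $P_{jk}\subseteq[\mathbf{x}^L,\mathbf{x}^U]$, a hypothesis the theorem statement leaves tacit (it is inherited from the variable bounds in \eqref{prob:GDP}); without it, a point of some $P_{jk}$ outside the box would lie in $\operatorname{conv}(F)$ yet admit no feasible lift, so the stated system would describe $\operatorname{conv}\bigl(\bigcup_{j}(P_{jk}\cap[\mathbf{x}^L,\mathbf{x}^U])\bigr)$ rather than $\operatorname{conv}(F)$. What each approach buys: the paper's citation inherits Balas's full generality (unbounded polyhedra and the attendant recession-cone machinery), while your argument is shorter, pinpoints exactly where boundedness enters, and can be verified without consulting outside references.
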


\begin{proof}
    See proof of Theorem 3.3 in \textcite{balas1985disjunctive}.
\end{proof}

Before we proceed, we consider the special case where all the linear coefficients in the constraints within the disjunctions are the same.

\begin{corollary}
\label{cor:chull}
        Let $F = \bigcup_{j \in D_k} P_{jk}, \text{ where } P_{jk} = \{ \mathbf{x} | \mathbf{A}_{k}\mathbf{x}\leq \mathbf{b}_{jk} \} \text{ and } P_{jk} \neq \emptyset$.
    The convex hull of $F$ is given by the set of \((\hat{\mathbf{x}}_{jk},y_{jk}) \in \mathbb{R}^{n+1}, k \in K, \ j \in D_k\) and \(\mathbf{x} \in \mathbb{R}^{n}\) satisfying the following set of linear inequalities
\begin{align*}
\label{eq:Hullsimple}
    & \mathbf{x} = \sum_{j \in D_k} \hat{\mathbf{x}}_{jk}\\
    & \mathbf{A}_{k}\,\hat{\mathbf{x}}_{jk}- \mathbf{b}_{jk}\,y_{jk} \leq 0 \quad,\ j \in D_k \\
    & \mathbf{x}^L y_{jk} \leq \hat{\mathbf{x}}_{jk} \leq \mathbf{x}^U y_{jk} \\
    & \sum_{j \in D_k} y_{jk} = 1 \quad \forall k \in K \\
    & y_{jk} \geq 0 \\
\end{align*}
\end{corollary}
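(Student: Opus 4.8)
The plan is to obtain this corollary as an immediate specialization of Theorem~\ref{th:chull}. That theorem characterizes the convex hull of a union of nonempty polyhedra $P_{jk} = \{\mathbf{x} \mid \mathbf{A}_{jk}\mathbf{x} \leq \mathbf{b}_{jk}\}$ for arbitrary, possibly distinct, coefficient matrices $\mathbf{A}_{jk}$. The present statement merely imposes the additional hypothesis that all disjuncts within a fixed disjunction $k$ share a common left-hand side, $\mathbf{A}_{jk} \equiv \mathbf{A}_k$ for every $j \in D_k$, so the natural strategy is to substitute this shared matrix into the conclusion of Theorem~\ref{th:chull} and verify that nothing in the argument breaks.

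First I would check that the hypotheses of Theorem~\ref{th:chull} remain in force under this restriction: each set $P_{jk} = \{\mathbf{x} \mid \mathbf{A}_k\mathbf{x} \leq \mathbf{b}_{jk}\}$ is still a polyhedron and is still assumed nonempty, so no premise of the theorem is violated by requiring the coefficient matrices to coincide. Next I would perform the substitution $\mathbf{A}_{jk} \mapsto \mathbf{A}_k$ directly in the system of linear inequalities produced by the theorem. The aggregation equation $\mathbf{x} = \sum_{j \in D_k}\hat{\mathbf{x}}_{jk}$, the variable-bound constraints $\mathbf{x}^L y_{jk} \leq \hat{\mathbf{x}}_{jk} \leq \mathbf{x}^U y_{jk}$, the convexity constraint $\sum_{j \in D_k} y_{jk} = 1$, and the nonnegativity $y_{jk} \geq 0$ are all independent of the choice of coefficient matrix and hence carry over unchanged. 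The only constraint in which $\mathbf{A}_{jk}$ appears is $\mathbf{A}_{jk}\hat{\mathbf{x}}_{jk} - \mathbf{b}_{jk}\,y_{jk} \leq 0$, which becomes $\mathbf{A}_k\hat{\mathbf{x}}_{jk} - \mathbf{b}_{jk}\,y_{jk} \leq 0$ after the substitution.

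Since the resulting system is exactly the characterization asserted in the corollary, the claim follows. There is essentially no obstacle to confront: the proof is a verbatim application of the more general theorem, and the shared-coefficient hypothesis only restricts the admissible inputs without requiring any new reasoning. The reason for stating the result separately lies not in the difficulty of its proof but in isolating the structural feature—namely, a common left-hand side $\mathbf{A}_k$ shared across all disjuncts—that the subsequent reaggregation strategy is designed to exploit.
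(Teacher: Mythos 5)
Your proof is correct and matches the paper's intent exactly: the paper states Corollary~\ref{cor:chull} without a separate proof, treating it as the immediate specialization of Theorem~\ref{th:chull} obtained by setting $\mathbf{A}_{jk} \equiv \mathbf{A}_k$ for all $j \in D_k$, which is precisely the substitution argument you give.
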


Notice that if we enforce a binary condition on the variables $y_{jk}$, and apply it to every disjunction in problem \eqref{prob:GDP}, we recover the known hull reformulation as shown in \eqref{eq:Hull}.

Now we cite a result known in the disjunctive programming literature for decades \cite{jeroslow1988simplification, blair1990representation, vielma2015mixed}, which is still relevant to the discussion as it provides a theoretical background to the strong formulation derived by \textcite{castro2012generalized} and generalized in this work.

\begin{theorem}[Theorem 2.2 in \textcite{jeroslow1988simplification}, Theorem 3 and 4 in \textcite{blair1990representation}]\label{th:reaggchull}
Let $F = \bigcup_{j \in D_k} P_{jk}, \text{ where } P_{jk} = \{ \mathbf{x} | \mathbf{A}_{k}\mathbf{x}\leq \mathbf{b}_{jk} \} \text{ and } P_{jk} \neq \emptyset$.
The convex hull of \(F\) is given by the set of \(y_{jk} \in \mathbb{R}^{1}, k \in K, \ j \in D_k\)  and \(\mathbf{x} \in \mathbb{R}^{n}\) satisfying the following set of linear inequalities

\label{eq:Hullreagg}
\begin{align*}
    & \mathbf{A}_{k}\,\mathbf{x} - \mathbf{b}_{jk}\,y_{jk} \leq 0, \quad\ j \in D_k \\
    & \sum_{j \in D_k} y_{jk} = 1 \quad \forall k \in K \\
    & y_{jk} \geq 0 \\
\end{align*}

\end{theorem}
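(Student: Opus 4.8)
The plan is to derive the reaggregated description by specializing the disaggregated convex-hull characterization of Corollary \ref{cor:chull} to the common--left-hand-side case and then projecting out the disaggregated copies $\hat{\mathbf{x}}_{jk}$. Fix a single disjunction index $k$ (the characterization is per disjunction) and write $F=\bigcup_{j\in D_k}P_{jk}$ with $P_{jk}=\{\mathbf{x}:\mathbf{A}_k\mathbf{x}\le\mathbf{b}_{jk}\}$. Let $\mathcal{A}$ denote the lifted set cut out by $\mathbf{A}_k\mathbf{x}\le\sum_{j\in D_k}\mathbf{b}_{jk}\,y_{jk}$, $\sum_{j\in D_k}y_{jk}=1$, and $y_{jk}\ge 0$, i.e.\ the single inequality obtained by aggregating the disjunct constraints over $j$, matching the reaggregated form realized in \eqref{eq:TSCHC}. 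The goal is to show that the projection of $\mathcal{A}$ onto $\mathbf{x}$ equals $\operatorname{conv}(F)$.

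For the inclusion $\operatorname{conv}(F)\subseteq\operatorname{proj}_{\mathbf{x}}\mathcal{A}$, I would start from the disaggregated system of Corollary \ref{cor:chull}: summing the per-disjunct inequalities $\mathbf{A}_k\hat{\mathbf{x}}_{jk}\le\mathbf{b}_{jk}y_{jk}$ over $j\in D_k$ and substituting $\mathbf{x}=\sum_{j}\hat{\mathbf{x}}_{jk}$ collapses them---precisely because the left-hand side $\mathbf{A}_k$ is shared---into the single aggregated inequality defining $\mathcal{A}$. Hence any point feasible for Corollary \ref{cor:chull} yields a point of $\mathcal{A}$ with the same $(\mathbf{x},\mathbf{y})$, and projecting onto $\mathbf{x}$ gives $\operatorname{conv}(F)\subseteq\operatorname{proj}_{\mathbf{x}}\mathcal{A}$. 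Equivalently, for each $\mathbf{x}\in P_{jk}$ the lift with $y_{jk}=1$ and all other components zero satisfies $\mathbf{A}_k\mathbf{x}\le\mathbf{b}_{jk}$ and so lies in $\mathcal{A}$, whence $F\subseteq\operatorname{proj}_{\mathbf{x}}\mathcal{A}$ and the projection of a polyhedron is convex.

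The reverse inclusion is the crux. Given $(\mathbf{x},\mathbf{y})\in\mathcal{A}$ I must certify $\mathbf{x}\in\operatorname{conv}(F)$; equivalently, reconstruct disaggregated variables $\hat{\mathbf{x}}_{jk}$ feasible for Corollary \ref{cor:chull} and then invoke that corollary. The main obstacle is that the naive choice $\hat{\mathbf{x}}_{jk}=y_{jk}\mathbf{x}$ only reproduces the aggregated inequality, not the per-disjunct bounds $\mathbf{A}_k\hat{\mathbf{x}}_{jk}\le\mathbf{b}_{jk}y_{jk}$, so a genuine construction is needed. The clean way I would argue this is through the Minkowski identity special to a shared $\mathbf{A}_k$: for nonnegative weights summing to one, $\sum_{j\in D_k}y_{jk}P_{jk}=\{\mathbf{z}:\mathbf{A}_k\mathbf{z}\le\sum_{j}\mathbf{b}_{jk}y_{jk}\}$, so membership in $\mathcal{A}$ already places $\mathbf{x}$ in $\sum_{j}y_{jk}P_{jk}\subseteq\operatorname{conv}(F)$. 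I would prove the nontrivial ($\supseteq$) direction of this identity either by Fourier--Motzkin elimination of the $\hat{\mathbf{x}}_{jk}$ from Corollary \ref{cor:chull}'s system, where the shared $\mathbf{A}_k$ guarantees that the only non-redundant surviving inequality is the aggregated one, or by a Farkas/LP-duality argument showing the reconstruction system is feasible exactly when $\mathbf{A}_k\mathbf{x}\le\sum_{j}\mathbf{b}_{jk}y_{jk}$ holds. A final technical point is unboundedness: the box bounds $\mathbf{x}^L y_{jk}\le\hat{\mathbf{x}}_{jk}\le\mathbf{x}^U y_{jk}$ of Corollary \ref{cor:chull} are either subsumed by bound rows already present in $\mathbf{A}_k$ or rendered unnecessary because all $P_{jk}$ share the recession cone $\{\mathbf{d}:\mathbf{A}_k\mathbf{d}\le 0\}$, so $\mathcal{A}$ still captures the closed convex hull for unbounded disjuncts; I would note this to close the equality.
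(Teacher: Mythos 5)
Your first inclusion (validity of the aggregated system) is fine and matches the aggregation argument the paper itself gives as intuition. The gap is in your ``crux'' step: the lemma you rely on --- that for nonempty polyhedra sharing a left-hand side, $\sum_{j\in D_k} y_{jk}P_{jk}=\{\mathbf{z}:\mathbf{A}_k\mathbf{z}\le\sum_{j}\mathbf{b}_{jk}y_{jk}\}$ --- is false in general, and neither Fourier--Motzkin nor a Farkas argument can establish it, because the conclusion of the theorem itself fails without further hypotheses. Concretely, take $n=2$,
\[
\mathbf{A}_k=\begin{pmatrix}1&0\\0&1\\1&1\end{pmatrix},\qquad
\mathbf{b}_{1k}=(1,1,10)^{\top},\qquad
\mathbf{b}_{2k}=(10,10,1)^{\top}.
\]
Both disjuncts contain the origin, every point of $P_{1k}$ satisfies $x_1+x_2\le 2$ and every point of $P_{2k}$ satisfies $x_1+x_2\le 1$, so every point of $\operatorname{conv}(P_{1k}\cup P_{2k})$ satisfies $x_1+x_2\le 2$. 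Yet $\mathbf{x}=(5,0.5)$ with $y_{1k}=y_{2k}=\tfrac12$ satisfies the aggregated system $\mathbf{A}_k\mathbf{x}\le\tfrac12\mathbf{b}_{1k}+\tfrac12\mathbf{b}_{2k}=(5.5,5.5,5.5)^{\top}$, since $x_1+x_2=5.5$. So the projection of your set $\mathcal{A}$ strictly contains the (closed) convex hull; adding box bounds to both disjuncts does not repair this, so it is not the unboundedness technicality you flag at the end. The reason your Fourier--Motzkin claim breaks is that eliminating the $\hat{\mathbf{x}}_{jk}$ from the Balas system yields one inequality $\mathbf{w}^{\top}\mathbf{x}\le\sum_{j}(\boldsymbol{\theta}_j^{\top}\mathbf{b}_{jk})y_{jk}$ for \emph{every} choice of multipliers $\boldsymbol{\theta}_j\ge 0$ with $\mathbf{A}_k^{\top}\boldsymbol{\theta}_j=\mathbf{w}$ for all $j$; the aggregated inequality captures only the choices with $\boldsymbol{\theta}_1=\dots=\boldsymbol{\theta}_{|D_k|}$. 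In the example, the hull facet $x_1+x_2\le 2y_{1k}+y_{2k}$ requires $\boldsymbol{\theta}_1=(1,1,0)^{\top}\neq\boldsymbol{\theta}_2=(0,0,1)^{\top}$, and it cuts off the point above.

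This also clarifies the relation to the paper's proof, which is not a direct argument at all but a citation: it points to Jeroslow's Theorem 2.2 and Blair's Theorem 3 for \emph{sufficient} conditions and Blair's Theorem 4 for \emph{necessary} conditions. Those extra structural conditions (roughly, that for each direction $\mathbf{w}$ a single dual multiplier $\boldsymbol{\theta}$ can be chosen optimal simultaneously for all right-hand sides $\mathbf{b}_{jk}$, equivalently that $\mathbf{b}\mapsto\min\{\boldsymbol{\theta}^{\top}\mathbf{b}:\mathbf{A}_k^{\top}\boldsymbol{\theta}=\mathbf{w},\ \boldsymbol{\theta}\ge0\}$ is affine on $\operatorname{conv}\{\mathbf{b}_{jk}\}_{j\in D_k}$) are exactly what your argument is missing; they do hold for the interval/difference-constraint structure of the paper's scheduling and strip-packing disjunctions, which is why the RHR is exact there. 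A correct write-up along your lines would have to state such a condition and use it in the reconstruction step, rather than derive exactness from the shared left-hand side and nonemptiness alone.
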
 

\begin{proof}

See proof of Theorem 2.2 in \textcite{jeroslow1988simplification} and Theorem 3 in \textcite{blair1990representation} for sufficient conditions and proof of Theorem 4 in \textcite{blair1990representation} for necessary conditions.

\end{proof}

To provide an intuitive interpretation of this problem, we use a derivation based on the aggregation of constraints.
This is a similar approach to that of \textcite{castro2012generalized} to derive the formulation \eqref{eq:TSCHC} from problem \eqref{prob:gdp_ts}.
We take advantage of the identical linear coefficient \(\mathbf{A}_k\) for every disjunct to use the reaggregation approach.
Summing the individual hull constraints \(\mathbf{A}_k \hat{\mathbf{x}}_{jk} \leq b_{jk} y_{jk}\) and using \(\mathbf{x} = \sum_{j \in D_k} \hat{\mathbf{x}}_{jk}\) yield the single reaggregated inequality,

\begin{equation*}
    \mathbf{A}_k \mathbf{x} \leq \sum_{j \in D_k} \mathbf{b}_{jk} y_{jk} \quad \forall k \in K.
\end{equation*}

By replacing all per‐disjunct copies \(\hat{\mathbf{x}}_{jk}\) with original continuous variables \(\mathbf{x}\) and maintaining only binary variables \(y_{jk}\) and related constraints, the reaggregated hull formulation eliminates extra continuous variables and their associated linking constraints.
This reduction in model size comes without sacrificing tightness.
Since all disjuncts share \(\mathbf{A}_k\), summing the individual constraints
\(\mathbf{A}_k\hat{\mathbf{x}}_{jk}\le \mathbf{b}_{jk}y_{jk}\) under
\(\mathbf{x}=\sum_j\hat{\mathbf{x}}_{jk}\) yields the single inequality
\(\mathbf{A}_k\mathbf{x}\le\sum_j \mathbf{b}_{jk}\,y_{jk}\).

The inequality of the reaggregated method describes the exact convex hull of the disjunctive set, yielding a compact MIP with strong LP relaxation and fewer variables and constraints than the complete HR \parencite{balas1974disjunctive,balas1985disjunctive, jeroslow1988simplification, blair1990representation}.

The reaggregated hull reformulation (RHR) for a problem \eqref{prob:GDP} with shared linear coefficients as in Eq. \eqref{eq:disj_same_lhs} then yields

\begin{equation}
\label{eq:HR-reagg}
\tag{RHR}
R= \left\{ (\mathbf{x},\mathbf{y}) :
\begin{aligned}
    & \mathbf{A}_k \mathbf{x} \leq \sum_{j \in D_k} \mathbf{b}_{jk} y_{jk}  \quad \forall k \in K, \ j \in D_k \\
    & \sum_{j \in D_k} y_{jk} = 1  \quad \forall k \in K.
\end{aligned}
\right\}
\end{equation}

\section{Results and Discussion}
\label{sec:results}

This section demonstrates the efficiency of the reaggregated hull formulation through several case studies.
The GDP models were implemented in Pyomo 6.9.0 \cite{bynum2021pyomo}.
Then, the GDP models are reformulated into MILP models using aggregated hull reformulation as well as Big-M and HR.
Each case study was executed on a Linux cluster equipped with 48 AMD EPYC 7643 2.3 GHz CPUs and 1 TB of RAM, with each run restricted to a single thread.
All MILP formulations were solved using Gurobi 11.0 \cite{gurobi}, and a relative optimality gap of 0.01\% was enforced for all runs.
We also ran the same experiments with SCIP~9.2.3 and HiGHS~1.10 under identical settings; detailed per-instance results and performance profiles are reported in the Supporting Information in \S~C and D, respectively \parencite{Achterberg2009, huangfu2018parallelizing}.
The implementation of the reformulations and instances presented in this work is available in the public repository \cite{secquoia_gdp_sched}. 

\subsection{Single-Unit Scheduling}

We reproduced the single-unit batch plant scheduling cases from \textcite{castro2012generalized} using all three scheduling concepts, General Precedence \eqref{prob:gdp_gp}, Immediate Precedence \eqref{eq:gdp_ip}, and Time-Slot \eqref{prob:gdp_ts}, and reformulated each GDP model as an MILP via both Big-M and hull reformulations.

As seen in \eqref{eq:TSCHC}, \textcite{castro2012generalized} derived the reaggregated hull reformulation of the problem \eqref{prob:gdp_ts}.
Below, we demonstrate that a similar reformulation can be achieved by modifying the problem \eqref{prob:gdp_gp} as follows.
The algebraic constraints \eqref{eq:gp_lb} and \eqref{eq:gp_ub} yield the bounds

\begin{equation}
    r_i \leq x_i \leq d_i - p_i \quad \forall i \in I,
\end{equation}
which restricts the feasible start times of each job \(i\).

\noindent From these inequalities, the bounds of \(x_i - x_j\) are

\begin{equation}
    r_i - (d_j - p_j) \leq x_i -x_j \leq (d_i - p_i) - r_j \quad \forall i \in I, \ \forall j \in I, \ i<j.
\end{equation}
The bounds created by the algebraic constraints can be applied to the disjunctions.
The disjunction will be reformulated as

\begin{align*}
&\begin{bmatrix}
    Y_{ij}\\[2pt]
    r_i - (d_j - p_j) \le x_i - x_j
      \le \min\bigl(-p_i,\;(d_i - p_i) - r_j\bigr)
\end{bmatrix}
\veebar
\\[6pt]
&\begin{bmatrix}
    \neg Y_{ij}\\[2pt]
    \max\bigl(p_j,\;r_i - (d_j - p_j)\bigr)
      \le x_i - x_j
      \le (d_i - p_i) - r_j
\end{bmatrix}.
\end{align*}
As each GP disjunct shares identical linear coefficients, the hull constraints can be summed and reaggregated into a single inequality, yielding the RHR for the \eqref{prob:gdp_gp} model.
As a side note, the procedure of including the global bound constraints in the disjunctions can also be interpreted as a basic step in disjunctive programming~\cite{balas2018disjunctive}.
This is a technique that increases the relaxation strength of a disjunctive formulation by introducing more constraints when potentially merging disjunctions or including constraints in the existing ones.
In this approach, this basic step is also justified as it led to a structure where the reaggregation technique is possible.

In contrast, the \eqref{eq:gdp_ip} model’s disjuncts use different linear coefficients in its constraints.  
We therefore apply the RHR approach only to \eqref{prob:gdp_gp} and \eqref{prob:gdp_ts}.
The ten examples (Ex. 1–Ex. 10) used for benchmarking are single‐unit scheduling instances originally introduced in \textcite{castro2012generalized}, whose detailed job-processing times and precedence structures are listed in the Supporting Information A and implemented in the repository \cite{secquoia_gdp_sched}.
For each of these instances, we tested the eight different MILP reformulations arising from the combination of the various GDP models with the Big-M, hull, or regagggregated hull reformulations.
We denote each combination of model and reformulation as a variant.
The ten single-unit instances have job counts \(|I| = 15, 20, 25, 30\), but the instance size also varies with processing times, release times, and due times, so cases with the same \(|I|\) can differ substantially.
We used a 900-second time limit per instance and compared their computational performance.


\begin{table}[t]
\centering
\caption{Benchmark results for the scheduling problem across 10 instances, each with a different number of jobs \( |I| \in \{15, 20, 25, 30\} \). 
        Each instance is solved using multiple formulations: General Precedence \eqref{prob:gdp_gp}, Immediate Precedence \eqref{eq:gdp_ip}, and Time Slot \eqref{prob:gdp_ts}, reformulated as a \ref{prob:mip} with Big-M \eqref{eq:BigM}, Hull \eqref{eq:Hull}, and Reaggregated Hull \eqref{eq:HR-reagg} reformulations.
        The benchmark compares the computational performance by reporting the time to prove optimality within 0.01\% in seconds with a time limit of 900 seconds. Entries of the form 900+ \((\gamma)\) indicate the run hit the limit with a remaining optimality gap of \(\gamma \%\); 900+ \((\infty)\) means no feasible incumbent was found.}
\label{tab:benchmark_results}
\setlength{\tabcolsep}{6pt}
\begin{tabular}{lrrrrr}
\toprule
Problem & \(|I|\) & Optimum & GP\_BM & GP\_HR & GP\_RHR \\
\midrule
Ex. 1  & 15 & 430 & 25.89 & 144.63 & 30.61 \\
Ex. 2  & 15 & 438 & 6.54  & 39.97  & 3.83  \\
Ex. 3  & 20 & 559 & 0.24  & 0.58   & 0.21  \\
Ex. 4  & 20 & 664 & 385.59& 900+ (5.87) & 459.09 \\
Ex. 5  & 20 & 680 & 54.96 & 219.75 & 56.59 \\
Ex. 6  & 25 & 739 & 25.30 & 175.28 & 26.31 \\
Ex. 7  & 25 & 700 & 866.15 & 900+ (5.86) & 900+ (0.71) \\
Ex. 8  & 25 & 772 & 900+ (3.89) & 900+ (5.44) & 900+ (3.50) \\
Ex. 9  & 30 & 853 & 900+ (25.56) & 900+ (30.48) & 900+ (33.28) \\
Ex. 10 & 30 & 929 & 900+ (19.16) & 900+ (25.19) & 900+ (20.45) \\
\bottomrule
\end{tabular}

\vspace{0.8em}

\begin{tabular}{lrrrrrrrr}
\toprule
Problem & \(|I|\) & Optimum & IP\_BM & IP\_HR & TS\_BM & TS\_HR & TS\_RHR \\
\midrule
Ex. 1  & 15 & 430 & 900+ (6.05) & 0.63   & 19.76               & 0.37 & 0.12 \\
Ex. 2  & 15 & 438 & 900+ (47.26) & 0.61   & 813.6               & 0.81 & 0.11 \\
Ex. 3  & 20 & 559 & 19.63      & 49.62  & 94.3                & 6.75 & 0.72 \\
Ex. 4  & 20 & 664 & 900+ (37.5)  & 25.54  & 900+ (54.97)          & 1.58 & 0.37 \\
Ex. 5  & 20 & 680 & 900+ (25.29) & 28.60  & 900+ (54.56)          & 1.82 & 0.22 \\
Ex. 6  & 25 & 739 & 900+ (21.24) & 28.67  & 900+ (59.64)\(^{*}\)  & 2.40 & 0.58 \\
Ex. 7  & 25 & 700 & 900+ \((\infty)\) & 900+ \((\infty)\) & 900+ \((\infty)\) & 2.10 & 1.54 \\
Ex. 8  & 25 & 772 & 900+ \((\infty)\) & 24.47 & 900+ \((\infty)\)   & 3.36 & 0.50 \\
Ex. 9  & 30 & 853 & 900+ \((\infty)\) & 24.12 & 900+ \((\infty)\)   & 2.57 & 0.43 \\
Ex. 10 & 30 & 929 & 900+ \((\infty)\) & 449.05 & 900+ \((\infty)\)  & 7.81 & 0.53 \\
\bottomrule
\end{tabular}

\vspace{0.3em}
\noindent\footnotesize\emph{\emph{Note}: Ex.~6 (TS\_BM) timed out at 900~s; best incumbent objective \(=769\) (suboptimal).}
\end{table}

Table~\ref{tab:benchmark_results} presents benchmark results for a series of scheduling problem instances formulated as various Generalized Disjunctive Programming (GDP) models and solved using different reformulation strategies.
Each table entry reports the solve time (in seconds) for a given method–instance pair.
If a run reached the 900-second limit without proving optimality, we record it as 900+ \( (\gamma)\), where \(\gamma\) is the solver-reported optimality gap (in \%) at the termination.
The notation 900+ \((\infty)\) indicates that no feasible incumbent was found, resulting in the gap being undefined.

In general, the table shows that the time-slot formulations (both \(\text{TS}\_\text{HR}\) and \(\text{TS}\_\text{RHR}\)) dominate the other variants, as the solver Gurobi solved every instance in less time. 
Within each model, the reaggregated hull variants (\(\text{GP}\_\text{RHR}\) and \(\text{TS}\_\text{RHR}\)) consistently beat their HR counterparts (\(\text{GP}\_\text{HR}\) and \(\text{TS}\_\text{HR}\)).



The time‐slot formulations (\(\text{TS}\_\text{HR}\) and \(\text{TS}\_\text{RHR}\))  were solved to proven optimality within 900 seconds for all ten instances. 
On the other hand, the Big-M time-slot variant \((\text{TS}\_\text{BM})\) frequently terminated at the time limit with a remaining gap; on Ex. 6 it timed out with a suboptimal incumbent (best = 769) and reported no feasible incumbent on the largest instances (Ex. 7–10).
Within the immediate precedence \eqref{eq:gdp_ip} variants, \((\text{IP}\_\text{HR})\) was solved to proven optimality on 9/10 instances within 900 seconds, with only Ex.~7 timing out with no feasible incumbent; in contrast, \((\text{IP}\_\text{BM})\) proved optimality only on Ex.~3 and otherwise reached the time limit, including no feasible incumbent on Ex. 7–10.
For the general precedence \eqref{prob:gdp_gp} formulations, feasible incumbents were obtained on every instance, but several runs, such as Ex. 8-10, reached the time limit without closing the optimality gap. 
Thus, \(\text{TS}\_\text{HR}\) and \(\text{TS}\_\text{RHR}\) closed the optimality gap on all instances within the time limit, the GP formulations frequently timed out with nonzero gaps, and only the \(\text{IP}\_\text{HR}\) formulation was solved most instances with in the limits.

Within the time-slot formulations, the reaggregated hull \((\text{TS}\_\text{RHR})\) consistently reduced runtimes relative to hull \((\text{TS}\_\text{HR})\) and was solved on all ten instances within 900 seconds.
By contrast, the Big-M reformulations of the time-slot concept (\(\text{TS}\_\text{BM}\)) frequently reached the time limit with a remaining gap, including cases with no feasible incumbent.
Even with tight $M$ coefficients, the weak relaxation led to slow convergence.

For the general-precedence \eqref{prob:gdp_gp} formulations, the reaggregated hull \((\text{GP}\_\text{RHR})\) improved over the hull reformulations \((\text{GP}\_\text{HR})\) by delivering shorter runtimes and smaller final gaps when being solved with Gurobi.
On the hardest instances, \(\text{GP}\_\text{RHR}\) was broadly comparable to the Big-M variant (\(\text{GP}\_\text{BM}\)).

Similar observations were made when solving these problems with open-source MIP solvers SCIP and Highs, and the details are presented in the Supporting Information.

Revisiting the single unit scheduling problem by \textcite{castro2012generalized} confirms that the time slot concept of the model using the reaggregated HR (\(\text{TS}\_\text{RHR}\)) leads to the best performing variant when solved using Gurobi.
Extending the reaggregated HR to the General Precedence \eqref{prob:gdp_gp} yields \(\text{GP}\_\text{RHR}\).
Despite the worse performance compared to ({\(\text{TS}\_\text{RHR}\)), the general precedence model, \(\text{GP}\_\text{RHR}\) outperforms its traditional HR variant (\(\text{GP}\_\text{HR}\)) and matched it Big-M alternative (\(\text{GP}\_\text{BM}\)).
Even with advances in solver algorithms and hardware, the strong MIP reformulations introduced by \textcite{castro2012generalized} continue to enable modern solvers to achieve superior performance.

\subsection{Strip-Packing Model}

The two‐dimensional strip‐packing problem was first posed by \textcite{baker1980orthogonal} and consists of placing a given set of non-rotatable axis-aligned rectangles, each of width \(L_i\) and height \(H_i\), into a fixed-width strip \(W\) to minimize its total length \(l_t\).  
Feasible solutions assign the continuous position variables for each rectangle \((x_i,y_i)\) subject to boundary constraints and pairwise non‐overlap requirements.
An example of an optimal layout for the strip‐packing problem is shown in Figure~\ref{fig:strip-packing}.
\textcite{castro2012time} were the first to bring planning-inspired time representation ideas to this problem, introducing discrete and continuous-time MILP models that reduce model size while preserving solution quality. 


\begin{figure}[ht]
    \centering

    \begin{subfigure}[b]{0.50\linewidth}
        \centering
        \includegraphics[width=\linewidth]{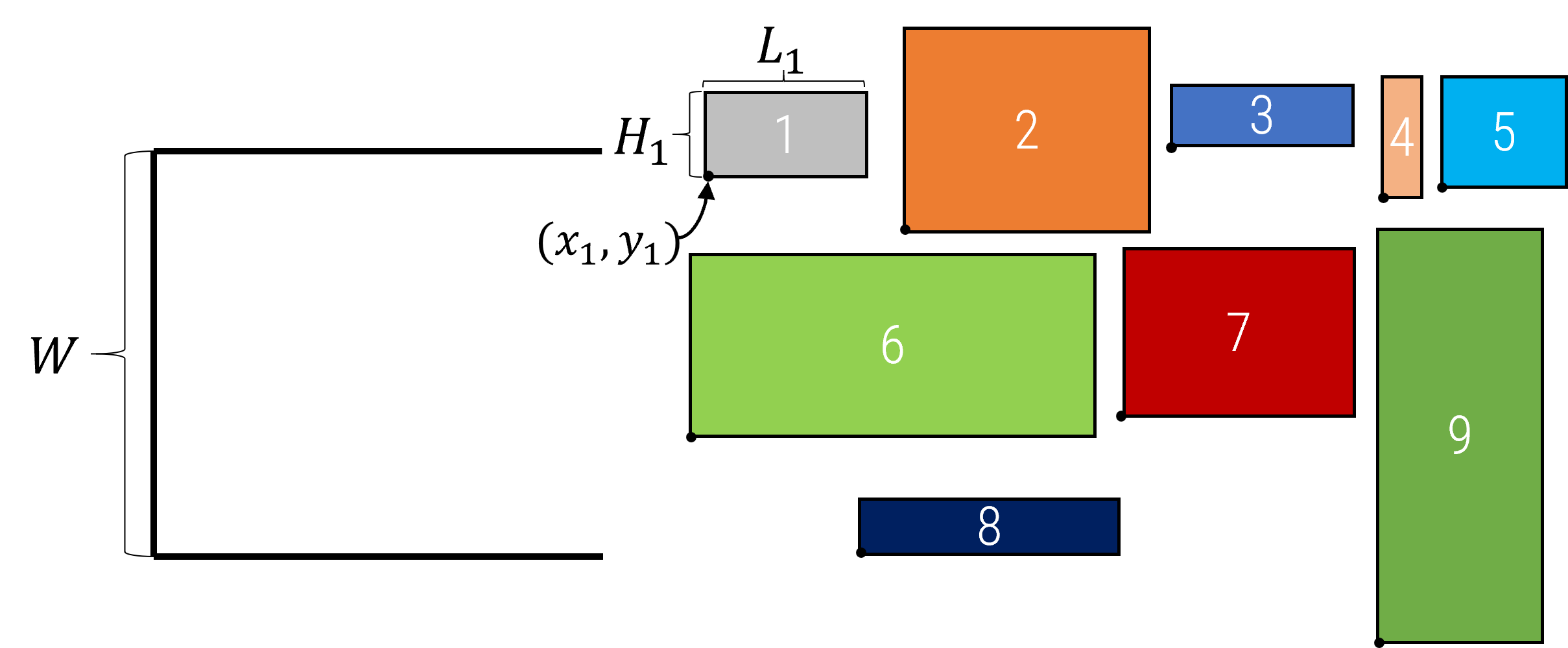}
        \caption{}
        \label{fig:modified_strip_a}
    \end{subfigure}
    \hfill
    \begin{subfigure}[b]{0.35\linewidth}
        \centering
        \includegraphics[width=\linewidth]{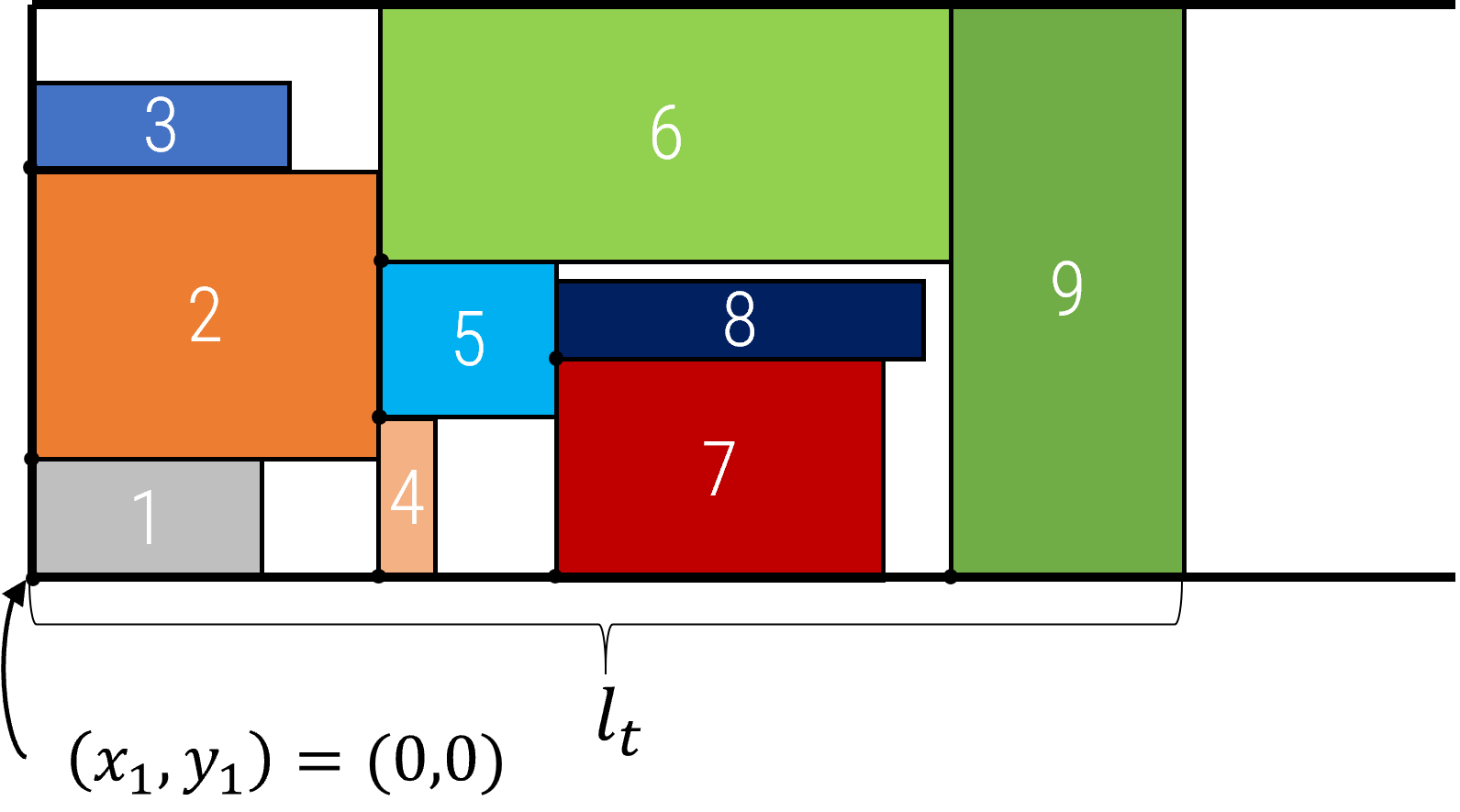}
        \caption{}
        \label{fig:modified_strip_b}
    \end{subfigure}

    \caption{Two-dimensional strip-packing problem. (a) Schematic of the strip-packing setup, where unrotatable rectangles are placed in a strip of fixed width \(W\) and unbounded length; variables \((x_i, y_i)\) specify the bottom-left corner of each rectangle. (b) Example of a feasible packing configuration, where the resulting strip length \(\ell_t\) is minimized subject to boundary and non-overlap constraints.}
    \label{fig:strip-packing}
\end{figure}

\textcite{sawaya2005cutting} were the first to cast the two‐dimensional strip‐packing problem in a generalized disjunctive programming (GDP) framework, which we employ here.
We used their GDP formulation here to test the reaggregated hull \cite{sawaya2005cutting}.
This GDP model provides a unified framework for the two-dimensional strip-packing problem by embedding both the geometric placement and the combinatorial non-overlap requirements directly into a logical disjunction structure.
The objective is to minimize the total strip length \(l_t\).

\begin{align*}
\tag{S-original}
\label{strip:original}
    \min_{\substack{l_t, \mathbf{x}, \mathbf{y}, \\ \mathbf{Z}^1, \mathbf{Z}^2}} \quad & l_t \\
    \text{s.t.} \quad & l_t \geq x_i + L_i, \quad \forall \ i \in N, \\
    &
  \begin{bmatrix}
    Z^1_{ij}\\
    x_i + L_i \le x_j
  \end{bmatrix}
  \;\vee\;
  \begin{bmatrix}
    Z^1_{ji}\\
    x_j + L_j \le x_i
  \end{bmatrix} 
  \;\vee\;
  \begin{bmatrix}
    Z^2_{ij}\\
    y_i - H_i \ge y_j\\
  \end{bmatrix}
  \;\vee\;
  \begin{bmatrix}
    Z^2_{ji}\\
    y_j - H_j \ge y_i\\
  \end{bmatrix}
    &\forall\,i,j\in N,\;i<j \\
    &Z^1_{ij} \,\veebar\, Z^1_{ji} \,\veebar\, Z^2_{ij} \,\veebar\, Z^2_{ji}, 
      \quad \forall\,i,j \in N,\;i<j,\\
    &x_i \in [0, UB - L_i] \subseteq \mathbb{R}, 
      \quad \forall\,i \in N,\\
    &y_i \in [H_i, W] \subseteq \mathbb{R}, 
      \quad \forall\,i \in N,\\
    &Z^1_{ij},\,Z^2_{ij} \in \{\text{False},\text{True}\}, 
      \quad \forall\,i,j \in N,\;i \neq j.
\end{align*}

Given this standard strip-packing GDP formulation, the continuous variables \(x_i\) and \(y_i\) refer to the horizontal and the vertical coordinates of the bottom-left corner of rectangle \(i\) within the strip.
In addition, each rectangle \(i\) is bounded by \(0 \leq x_i \leq UB - L_i\), and \(H_i \leq y_i \leq W\) to ensure its locations within a fixed width strip \(W\).
For every unordered pair \((i,j)\), the four Boolean variables \(Z_{ij}^1, Z_{ji}^1, Z_{ij}^2, Z_{ji}^2\) index the mutually exclusive spatial relations; ``\(i\) to the left of \(j\)'', ``\(j\) to the left of \(i\)'', ``\(i\) to the above of \(j\)'', ``\(j\) to the above of \(i\)''.
The exclusive-or constraint \(\veebar\) enforces that exactly one of these relations is activated.
Therefore, the disjunctions guarantee that no two rectangles overlap within the strips.

\textcite{trespalacios2017symmetry} presented a symmetry-breaking GDP formulation for the two-dimensional strip-packing problem as follows

\begin{align*}
\tag{S-symbreak}
\label{strip:tres}
\min_{\substack{l_t, \mathbf{x}, \mathbf{y}, \\ \mathbf{Z}^1, \mathbf{Z}^2}} \quad & l_t \\
\text{s.t.}\quad 
& l_t \;\ge\; x_i + L_i,
\quad \forall \ i\in N,\\
& 
  \begin{bmatrix}
    Z^1_{ij}\\
    x_i + L_i \le x_j
  \end{bmatrix}
  \;\vee\;
  \begin{bmatrix}
    Z^1_{ji}\\
    x_j + L_j \le x_i
  \end{bmatrix}
  \;\vee\;
  \begin{bmatrix}
    Z^2_{ij}\\
    y_i - H_i \ge y_j\\
    x_i + L_i \ge x_j\\
    x_j + L_j \ge x_i
  \end{bmatrix}
  \;\vee\;
  \begin{bmatrix}
    Z^2_{ji}\\
    y_j - H_j \ge y_i\\
    x_i + L_i \ge x_j\\
    x_j + L_j \ge x_i
  \end{bmatrix}
&\forall\,i,j\in N,\;i<j \\
&Z^1_{ij} \,\veebar\, Z^1_{ji} \,\veebar\, Z^2_{ij} \,\veebar\, Z^2_{ji}, 
  \quad \forall\,i,j \in N,\;i<j,\\
&x_i \in [0, UB - L_i] \subseteq \mathbb{R}, 
  \quad \forall\,i \in N,\\
&y_i \in [H_i, W] \subseteq \mathbb{R}, 
  \quad \forall\,i \in N,\\
&Z^1_{ij},\,Z^2_{ij} \in \{\text{False},\text{True}\}, 
  \quad \forall\,i,j \in N,\;i \neq j.
\end{align*}

In \eqref{strip:tres}, each vertical disjunct includes both the usual \(y\)-separation and additional $x$-separation constraints to avoid symmetric packings.
By enforcing horizontal bounds together with the vertical disjunctions, the \eqref{strip:tres} model eliminates redundant, equivalent solutions and enhances the performance of MILP solvers.
Specifically, in the third and fourth disjunctive matrices, the extra constraints
\begin{equation*}
x_i + L_i \geq x_j\quad \text{and} \quad x_j + L_j \geq x_i
\end{equation*}
impose a left-to-right ordering when rectangles are stacked vertically.
Therefore, these constraints break mirror-image symmetries and reduce redundant search space.
This formulation proved to be more amenable to MILP solvers, leading to improved numerical performance \parencite{trespalacios2017symmetry}.

Both strip-packing formulations, standard \eqref{strip:original} and symmetry-breaking \eqref{strip:tres} of \textcite{trespalacios2017symmetry}, admit a strengthening when cast under the reaggregated hull framework.
In particular, observe that every rectangle \(i\) is intrinsically bounded by
\begin{equation*}
    0 \leq x_i \leq UB - L_i, \quad H_i \leq y_i \leq W,
\end{equation*}
where \(UB\) is an upper bound on the strip length and \(W\) its fixed width.
Given pair \((i,j)\) that \(i<j\), the general constraints can form bounds
\begin{equation*}
    -UB + L_j \leq x_i - x_j \leq UB - L_i, \quad -W + H_i \leq y_i - y_j \leq W - H_j.
\end{equation*}
By enforcing these global difference bounds within each disjunct, the coefficients on \((x_i - x_j)\) and \((y_i-y_j)\) become identical across all spatial relations.
Once again, this would be equivalent to a basic step between the global constraints and all the disjuncts \cite{balas2018disjunctive}.

The two formulations that we are comparing here result in

\begin{align*}
\tag{S0}
\label{strip:s0}
    \min_{\substack{l_t, \mathbf{x}, \mathbf{y}, \\ \mathbf{Z}^1, \mathbf{Z}^2}} \quad & l_t \\
    \text{s.t.} \quad & l_t \geq x_i + L_i, \quad \forall \ i \in N, \\
    &
  \begin{bmatrix}
    Z^1_{ij}\\
    x_i - x_j \leq -L_i\\
    y_i - y_j \geq -W + H_i\\
    y_i - y_j \leq W - H_j \\
    x_i - x_j \geq -UB + L_j
  \end{bmatrix}
  \;\vee\;
  \begin{bmatrix}
    Z^1_{ji}\\
    x_i - x_j \geq L_j\\
    y_i - y_j \geq -W + H_i\\
    y_i - y_j \leq W - H_j \\
    x_i - x_j \leq UB - L_i
  \end{bmatrix} \\
  &\;\vee\;
  \begin{bmatrix}
    Z^2_{ij}\\
    y_i - y_j \ge H_i\\
    x_i - x_j \geq -UB + L_j \\
    x_i - x_j \leq UB - L_i\\
    y_i - y_j \leq W - H_j
  \end{bmatrix}
  \;\vee\;
  \begin{bmatrix}
    Z^2_{ji}\\
    y_i - y_j \leq - H_j\\
    x_i - x_j \geq -UB + L_j \\
    x_i - x_j \leq UB - L_i \\
    y_i - y_j \geq -W + H_i
  \end{bmatrix}
    &\forall\,i,j\in N,\;i<j \\
    &Z^1_{ij} \,\veebar\, Z^1_{ji} \,\veebar\, Z^2_{ij} \,\veebar\, Z^2_{ji}, 
      \quad \forall\,i,j \in N,\;i<j,\\
    &x_i \in [0, UB - L_i] \subseteq \mathbb{R}, 
      \quad \forall\,i \in N,\\
    &y_i \in [H_i, W] \subseteq \mathbb{R}, 
      \quad \forall\,i \in N,\\
    &Z^1_{ij},\,Z^2_{ij} \in \{\text{False},\text{True}\}, 
      \quad \forall\,i,j \in N,\;i \neq j,
\end{align*}

which is a version of \eqref{strip:original} with additional constraints and 
\begin{align*}
\tag{S1}
\label{strip:s1}
    \min_{\substack{l_t, \mathbf{x}, \mathbf{y}, \\ \mathbf{Z}^1, \mathbf{Z}^2}} \quad & l_t \\
    \text{s.t.} \quad & l_t \geq x_i + L_i, \quad \forall \ i \in N, \\
    &
  \begin{bmatrix}
    Z^1_{ij}\\
    x_i - x_j \leq -L_i\\
    y_i - y_j \geq -W + H_i\\
    y_i - y_j \leq W - H_j \\
    x_i - x_j \geq -UB + L_j
  \end{bmatrix}
  \;\vee\;
  \begin{bmatrix}
    Z^1_{ji}\\
    x_i - x_j \geq L_j\\
    y_i - y_j \geq -W + H_i\\
    y_i - y_j \leq W - H_j \\
    x_i - x_j \leq UB - L_i
  \end{bmatrix} \\
  &\;\vee\;
  \begin{bmatrix}
    Z^2_{ij}\\
    y_i - y_j \ge H_i\\
    x_i - x_j \geq \max \{-UB + L_j, -L_i\} \\
    x_i - x_j \leq \min \{UB - L_i, L_j\}\\
    y_i - y_j \leq W - H_j
  \end{bmatrix}
  \;\vee\;
  \begin{bmatrix}
    Z^2_{ji}\\
    y_i - y_j \leq - H_j\\
    x_i - x_j \geq \max \{-UB + L_j, -L_i\} \\
    x_i - x_j \leq \min \{UB - L_i, L_j\}\\
    y_i - y_j \geq -W + H_i
  \end{bmatrix}
    &\forall\,i,j\in N,\;i<j \\
    &Z^1_{ij} \,\veebar\, Z^1_{ji} \,\veebar\, Z^2_{ij} \,\veebar\, Z^2_{ji}, 
      \quad \forall\,i,j \in N,\;i<j,\\
    &x_i \in [0, UB - L_i] \subseteq \mathbb{R}, 
      \quad \forall\,i \in N,\\
    &y_i \in [H_i, W] \subseteq \mathbb{R}, 
      \quad \forall\,i \in N,\\
    &Z^1_{ij},\,Z^2_{ij} \in \{\text{False},\text{True}\}, 
      \quad \forall\,i,j \in N,\;i \neq j,
\end{align*}

which is \eqref{strip:tres} with additional constraints strengthened by the symmetry-breaking bounds.
Both formulations include additional linear inequalities in the disjunctions to satisfy the reaggregation condition in the RHR.
Thus, RHR can be applied directly to both \eqref{strip:s0} and \eqref{strip:s1} strip-packing models.

We generate ten instances for each count of rectangles, \(n=5,\dots,19\) (in total, 150 instances), and apply three reformulations, Big-M, HR, and RHR, to both \eqref{strip:s0} and \eqref{strip:s1}. 
Each variant was solved with a per-instance time limit of 1,800 seconds.

\begin{figure}[htbp]
  \centering
  \includegraphics[width=\textwidth]{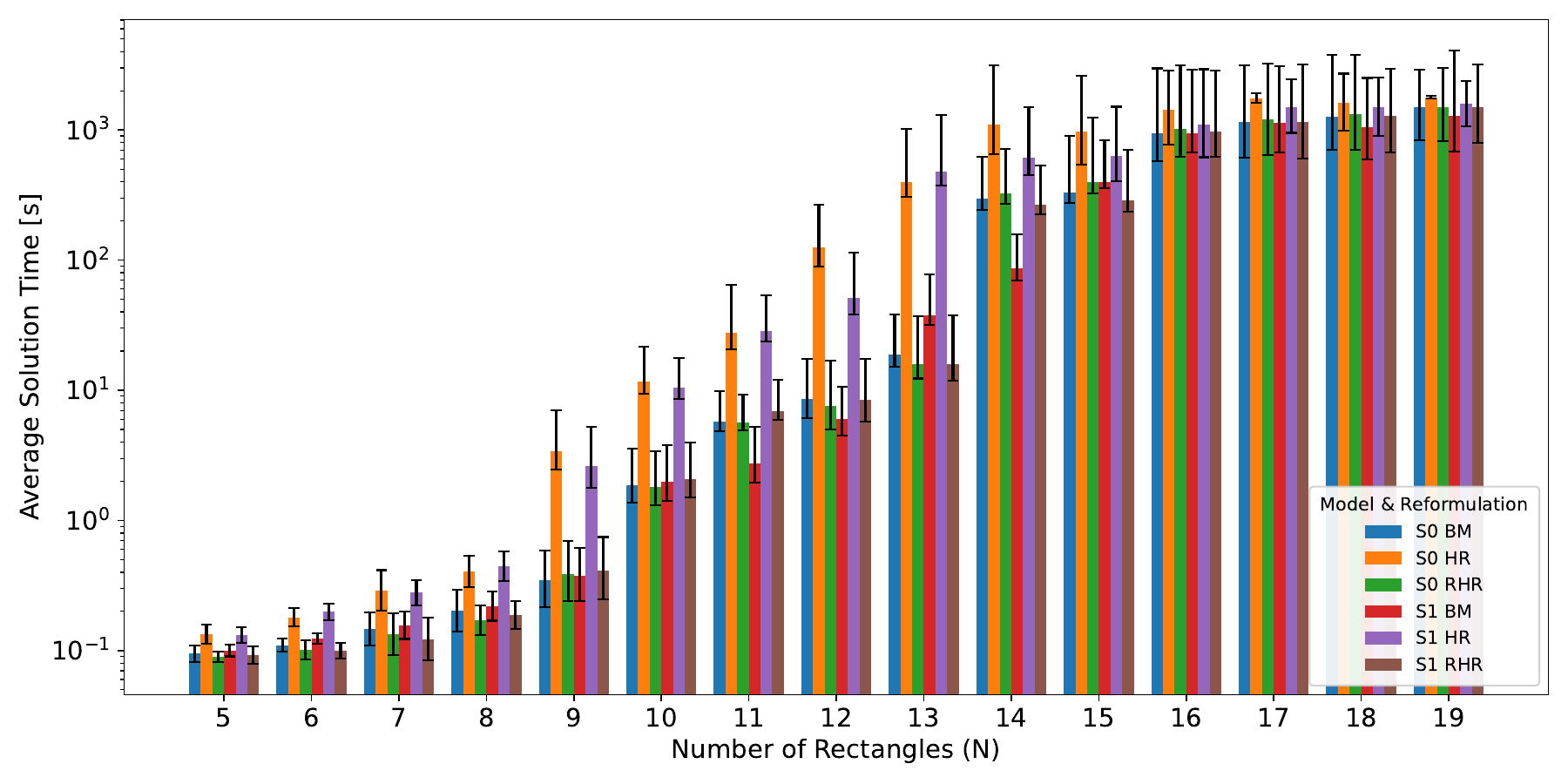}
  \caption{Benchmark run times for strip packing formulations (Gurobi). The \(x\)-axis label “Number of Rectangles” indicates the problem size. At each size, six bars are shown in the following order: \ref{strip:s0} BM, \ref{strip:s0} HR, \ref{strip:s0} RHR, \ref{strip:s1} BM, \ref{strip:s1} HR, and \ref{strip:s1} RHR, where \ref{strip:s0} is the standard strip-packing model and \ref{strip:s1} is the symmetry-breaking model. Bars report the average solution times (seconds) over ten instances per size, with error bars indicating \(\pm\)1 standard deviation.}
  \label{fig:strip-benchmark-run-times}
\end{figure}

The solution times for the ten different cases were aggregated into sample means and standard deviations for each reformulation and the number of strips, yielding Figure \ref{fig:strip-benchmark-run-times}.
The bar plots show the mean solve time for each reformulation across ten random instances at each problem size (number of rectangles on the \(x\)-axis).  
The black error bars denote one standard deviation.
\ref{strip:s0}  denotes the standard GDP formulation with only basic horizontal and vertical separation. 
In contrast, \ref{strip:s1} denotes the symmetry-breaking variant with added horizontal ordering constraints to eliminate mirror-image packings.
Big-M, HR, and RHR were applied to both models, yielding six variants.

In the original strip-packing formulation, the reaggregated hull variant consistently achieves lower mean solve times than the standard hull reformulation across all instance sizes, because the reaggregated formulation uses fewer variables and constraints per packing choice.
Similarly, in \eqref{strip:s1} symmetry-breaking model, the reaggregated hull approach outperforms its classic hull counterpart at every problem size, demonstrating that these cuts continue to speed up solving even when additional horizontal ordering rules are introduced.
Notably, the base formulation \eqref{strip:s0} and the symmetry-breaking formulation \eqref{strip:s1} exhibit virtually identical solve times, indicating that the additional horizontal ordering constraints in \eqref{strip:s1} have a negligible impact on performance, in contrast to speedups reported by \textcite{trespalacios2017symmetry} from symmetry breaking.
Moreover, across both the \eqref{strip:s0} and \eqref{strip:s1} models, the reaggregated hull variant consistently runs slightly faster than its Big-M counterpart.

In Figure \ref{fig:strip-benchmark-run-times}, the error bars represent \(\pm 1\) standard deviation of the solve times over ten random instances for each problem size.
For the largest problems, i.e., 17 rectangles, the HR often reaches the 1,800-second time cap on every instance, resulting in error bars that span up to the time limit.
In contrast, the reaggregated hull variants (both \eqref{strip:original} and \eqref{strip:tres}) converge before the cutoff in every instance, and their much shorter error bars reflect consistency in the solve times.

\begin{figure}
    \centering
    \includegraphics[width=0.8\linewidth]{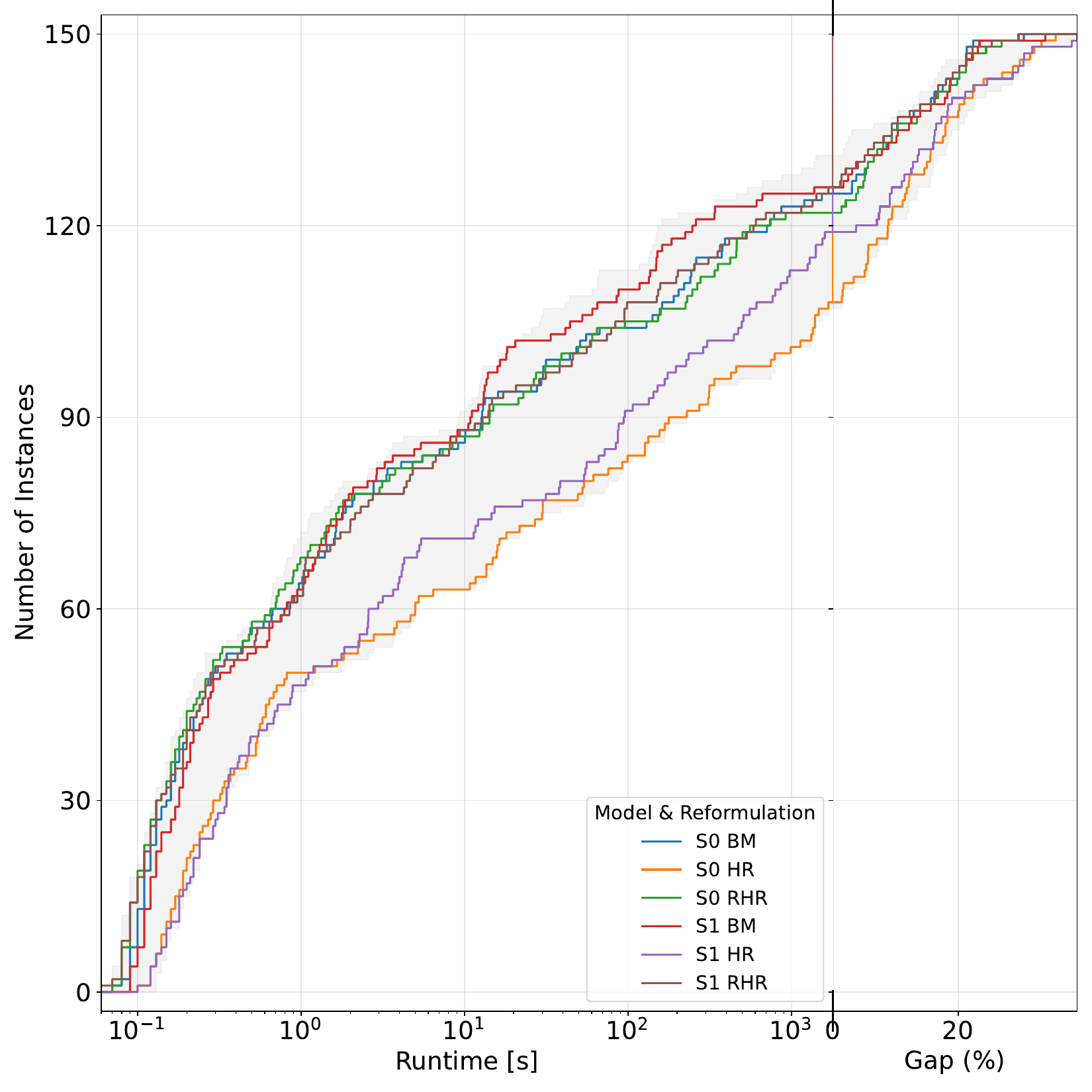}
    \caption{Absolute performance profile of the strip-packing model (Gurobi). Curves compare six variants: \ref{strip:s0} BM, \ref{strip:s0} HR, \ref{strip:s0} RHR, \ref{strip:s1} BM, \ref{strip:s1} HR, and \ref{strip:s1} RHR. The left panel reports the cumulative number of instances solved within runtime \(t\) seconds (cutoff 1,800 seconds). The right panel reports the cumulative number of instances whose optimality gap is at most \(\gamma\) \((\%)\) within the same limit. The shaded gray region spans the virtual best and virtual worst envelopes, formed by taking for each instance the minimum and maximum performance across all variants.}
    \label{fig:strip-performance-profile}
\end{figure}

Figure \ref{fig:strip-performance-profile} shows the absolute performance profile over all 150 randomly generated strip-packing models, comparing the six different reformulations.
On the left, each curve shows how many of the ten instances a method solves within time \(t\) up to the 1,800-second cutoff \cite{Dolan2002}.
On the right, each curve plots, for each optimality‐gap threshold \(\gamma\) (\%), the cumulative number of instances whose gap is reduced to at most \(\gamma\) within the time limit.
Any method that fails to solve an instance or to find a solution to compute a remaining optimality gap within the time limit is recorded as “aborted” and does not contribute beyond that point, causing its curve to plateau.
The shaded grey region spans the virtual best and virtual worst frontiers, defined by the per instance minimum and maximum performance across all reformulations.
Overall, it is possible to compare methods based on both solution time and optimality gap from the absolute performance profile.

Among the base formulations \eqref{strip:s0} strip packing models, the reaggregated hull variant outperforms both the Big-M and hull reformulations.
The reaggregated hull variants result in the most instances being solved within the time limit and yield the smallest optimality gap when optimality cannot be proven within that time limit.
The \eqref{strip:s0} hull reformulation, despite its theoretically stronger relaxation compared to Big-M, leads to the weakest performance when solving the problems with Gurobi among the three original reformulations.
Similar observations are made when solving these problems with the solvers SCIP and Highs, as presented in the Supporting Information, albeit with slower performance across all instances compared to Gurobi.


Overall, the strip-packing case studies demonstrate that the RHR outperforms the HR and makes it competitive with the Big-M reformulation.


\section{Conclusion and Future Work}

The computational results demonstrate that the reaggregated hull reformulation delivers an improved performance across both case studies compared to the extended hull reformulation.
Moreover, it makes this theoretically stronger formulation practically competitive with the more traditionally used Big-M formulation.
In the single-unit scheduling benchmark, the problem \eqref{prob:gdp_ts} reformulated as \ref{eq:HR-reagg} solved ten instances well under the 900-second limit, remarkably the fastest compared to other variants.
These results re-emphasize the early observations made in \textcite{castro2012generalized}.
Extending the reaggregated hull from the time-slot concept to the general precedence concept, the model in which the reaggregated hull reformulation outperformed both the Big-M and the hull reformulation of the same model. 
For the strip-packing problems, which \textcite{castro2012time} had identified a connection with the scheduling models, the reaggregated hull approach achieved the lowest mean solve times and smallest variances at every instance size, surpassing its hull counterpart and making it competitive with the Big-M formulations.
Using basic steps to build a formulation where the reaggregation technique would apply led to the surprising observation that symmetry-breaking effects, before reported as helpful for this type of problem, resulted in no apparent difference.
These findings confirm that collapsing disaggregated variables via shared-coefficient reaggregation preserves the convex-hull tightness while reducing model size, yielding both faster solves and more reliable convergence.

Future work will extend the evaluation of the reaggregated hull reformulation to a broader suite of benchmark problems.
We have identified it being a valid approach for model predictive control in the operation of a district energy system for PV self-consumption and grid decarbonization as presented in \cite{kim2022generalized}.
Moreover, formulations appearing in the representation of piece-wise linear constraints exhibit the shared linear coefficient structure that can be exploited in the reaggregated hull reformulation \cite{vielma2015mixed}.
We will investigate the performance of the reaggregation strategy to GDP models where the global constraints are nonlinear.
Examples include the surrogate-based decision-tree formulations recently proposed for engineering optimization applications in \cite{ammari2023linear}.

\section*{Supporting Information}

The following information is available in the Supporting Information file: Specifications of single-unit scheduling instances (Examples 1–10); 
model-size comparisons for BM, HR, and RHR formulations; 
extended solver results for Gurobi, SCIP, and HiGHS; 
performance profiles for single-unit scheduling and strip-packing benchmarks.

\section*{Acknowledgments}
The authors acknowledge and express sincere gratitude to the Davidson School of Chemical Engineering at Purdue University for providing a supportive research environment and funding.
We thank \texttt{Gurobi} Optimization for providing access to software through their academic licensing program.
Finally, D.E.B.N. would like to dedicate this work to the memory of Pedro Castro.
We are just building on top of his ideas in this manuscript.
His presence in conferences, his sense of humor, and his passion for everything he did (including solving optimization problems or watching soccer games) will be greatly missed.

\printbibliography

\newpage

\rule{0.05in}{1.75in}%
\begin{minipage}[b][1.75in]{3.25in}
    \centering
    \sffamily
    \frenchspacing

    \includegraphics[width=\linewidth]{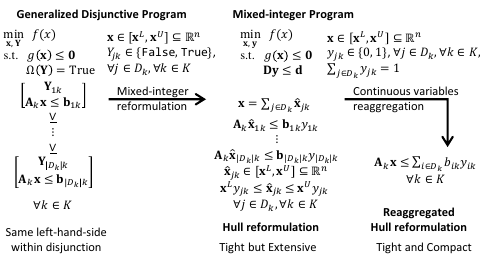}

\end{minipage}%
\rule{0.05in}{1.75in}

\end{document}